\documentclass[12pt]{amsart}
\usepackage{amssymb}
\usepackage{amscd}
\usepackage{epsfig}
\usepackage{xy}
\usepackage{amsfonts}
\usepackage{amsmath}
\usepackage{amsthm}
\usepackage{times}
\setlength{\topmargin}{.4in}
\setlength{\headheight}{-0.07in}
\setlength{\textheight}{8.0in}
\setlength{\oddsidemargin}{0.2in}
\setlength{\evensidemargin}{0.2in}
\setlength{\textwidth}{6.1in}
\setlength{\headsep}{.4in}

\theoremstyle{plain}
\newtheorem{prop}{Proposition}[section]
\newtheorem{thm}[prop]{Theorem}
\newtheorem{cor}[prop]{Corollary}
\newtheorem{lem}[prop]{Lemma}

\theoremstyle{definition}
\newtheorem{defn}[prop]{Definition}
\newtheorem{rem}[prop]{Remark}
\newtheorem{example}[prop]{Example}

\numberwithin{equation}{section}


\newcommand{\F}{\mathbb{F}}
\newcommand{\Z}{\mathbb{Z}}


\begin{document}

\title{Bruhat-Chevalley order on the rook monoid}

\author{Mahir Bilen Can,\\ Lex E. Renner}
\address{University of Western Ontario, Canada}
\email{mcan@uwo.ca, lex@uwo.ca}

\date{}

\begin{abstract}
The rook monoid $R_n$ is the finite monoid whose elements are the $0-1$ matrices with at most one nonzero entry in each row and column. The group of invertible elements of $R_n$ is isomorphic to the symmetric group $S_n$. The natural extension to $R_n$ of the Bruhat-Chevalley ordering on the symmetric group is defined in \cite{Renner86}. In this paper, we find an efficient, combinatorial description of the Bruhat-Chevalley ordering on $R_n$. We also give a useful, combinatorial formula for the length function on $R_n$.
\end{abstract}

\maketitle

\section{\textbf{Introduction}}

Let $GL_n$ be the general linear group over an algebraically closed field $\F$. There is a much-studied decomposition of $GL_n$ into double cosets of the Borel subgroup $B\subset GL_n$ of invertible upper triangular matrices
\begin{equation}\label{E:GBdecomposition}
GL_n = \bigcup_{w\in S_n} B w B,
\end{equation}
where the union is indexed by the symmetric group $S_n$. Elements of $S_n$ are identified with
$0-1$ matrices with exactly one nonzero entry in each row and column.

The decomposition  in (\ref{E:GBdecomposition})  is often refered to as the Bruhat decomposition and it holds, more generally, for reductive groups and reductive monoids (see \cite{PennellPutchaRenner,Renner86}).  In the case of the monoid $M_n$ of $n\times n$ matrices, the Bruhat decomposition is given by
\begin{equation}
M_n = \bigcup_{\sigma \in R_n} B \sigma B,
\end{equation}
where the union is indexed by the rook monoid $R_n$. The elements of $R_n$ are identified with $0-1$ matrices which have at most one nonzero entry in each row and column.

The Bruhat-Chevalley order on $S_n$ is defined in terms of the inclusion relationships between double cosets in (\ref{E:GBdecomposition}). Namely,
if $v, w \in S_n$, then
\begin{equation}\label{E:BCgroup}
v \leq w\ \iff\ Bv B \subseteq \overline{Bw B},
\end{equation}
where the overline stands for the Zariski closure in $GL_n$.

There is a natural extension of this partial order on the rook monoid $R_n$ (see \cite{PennellPutchaRenner, Renner86} for more details).
\begin{equation}\label{E:BCmonoid}
\sigma \leq \tau\ \iff\ B\sigma B \subseteq \overline{B \tau B},
\end{equation}
for $\sigma,\tau\in R_n$.

In \cite{Putcha01}, Putcha describes the partial ordering
(\ref{E:BCmonoid}) for the constant-rank subsets of the rook monoid in terms of the Bruhat order
of related symmetric groups (he describes this partial order, much more generally, for any $J$-class of a Renner monoid).

In \cite{MS05}, using a partial ordering exactly like (\ref{E:BCmonoid}), Miller and Sturmfels study the poset of Zariski closures of $B\times B_+$-orbits on the space of the $k\times l$ matrices. Here $B$ denotes the group of the invertible upper triangular $k\times k$ matrices, and $B_+$ denotes the group of invertible lower triangular $l\times l$ matrices. These $B\times B_+$-orbits are indexed by the
$0-1$, $k\times l$ matrices with at most one nonzero entry in each row and column.

For computational purposes, one would like to have an efficient, combinatorial characterization of the Bruhat-Chevalley ordering on $R_n$. This characterization, in the case of the symmetric group, had been explained to us by V. Deodhar.

\subsubsection{Deodhar's characterization}

For an integer valued vector $a=(a_1,...,a_n)\in \Z^n$, let $\widetilde{a} = (a_{\alpha_1},....,a_{\alpha_n})$  be the rearrangement of the entries $a_1,...,a_n$ of $a$ in a non-increasing fashion;
\begin{equation*}
a_{\alpha_1} \geq a_{\alpha_2} \geq \cdots \geq a_{\alpha_n}.
\end{equation*}

The \textit{containment ordering}, ``$\leq_c$,'' on $\Z^n$ is then defined by
\begin{equation*}
a=(a_1,...,a_n) \leq_c b=(b_1,...,b_n) \iff a_{\alpha_j} \leq b_{\alpha_j}\ \text{for all}\  j=1,...,n.
\end{equation*}
where $\widetilde{a} = (a_{\alpha_1},....,a_{\alpha_n})$, and $\widetilde{b} = (b_{\alpha_1},....,b_{\alpha_n})$.

Let $k\in \{1,...,n\}$.  The \textit{$k$'th truncation}, $a(k)$ of $a=(a_1,...,a_n)$ is defined to be
\begin{equation*}
a(k)=(a_1, a_2,...,a_k).
\end{equation*}

We represent the elements of the symmetric group $S_n$ by $n$-tuples; for $v \in S_n$ let $(v_1,...,v_n)$ be the sequence where $v_j$ is the row index of the nonzero entry in the $j$'th column of the matrix $v$. For example, the $4$-tuple associated with the permutation matrix
\begin{equation}\label{E:Pexample}
v=\begin{pmatrix}
0 & 1 & 0 & 0 \\
0 & 0 & 0 & 1 \\
1 & 0 & 0 & 0 \\
0 & 0 & 1 & 0
\end{pmatrix}\ \text{is}\ (3142).
\end{equation}
In general, we write $v=(v_1,...,v_n)$ for the corresponding permutation matrix.

\begin{defn}
The Deodhar ordering, $\leq_D$, on $S_n$ is defined by
\begin{equation}\label{D:deodharsdef}
v=(v_1,...,v_n) \leq_D w=(w_1,...,w_n) \iff \widetilde{v(k)} \leq_c \widetilde{w(k)}\ \text{for all}\ k=1,...,n.
\end{equation}
\end{defn}

\begin{rem}
The Deodhar ordering, $\leq_D$ is equivalent to the Bruhat-Chevalley ordering on $S_n$. Although there seems to be no published proof of this fact, it follows as a corollary of our main theorem.
\end{rem}

For the rook monoid $R_n$, a combinatorial description of the Bruhat-Chevalley ordering is given in \cite{PennellPutchaRenner}. We summarize it here.

We represent the elements of  $R_n$ by $n$-tuples of nonnegative integers. Given $x=(x_{ij}) \in R_n$, let $(a_1,...,a_n)$ be the sequence defined by
\begin{equation}\label{E:oneline}
a_j =
\begin{cases}
0,  &\text{if the $j$th column consists of zeros;}\\
i,   &\text{if $x_{ij}=1$.}
\end{cases}
\end{equation}
For example, the sequence associated with the matrix
\begin{equation*}
\begin{pmatrix}
0 & 0 & 0 & 0 \\
0 & 0 & 0 & 0 \\
1 & 0 & 0 & 0 \\
0 & 0 & 1 & 0
\end{pmatrix}
\end{equation*}
is $(3040)$.

\begin{thm}\label{T:PPR}\cite{PennellPutchaRenner}
Let $x = (a_1,...,a_n)$, $y=(b_1,...,b_n) \in R_n$. Then the Bruhat-Chevalley order on
$R_n$ is the smallest partial order on $R_n$ generated by declaring
$x \leq y$ if either
\begin{enumerate}
\item there exists an $1 \leq i \leq n$ such that $b_i> a_i$ and $b_j = a_j$ for all $j\neq i$, or
\item  there exist  $1 \leq i < j \leq n$ such that $b_i=a_j,\ b_j=a_i$ with $b_i > b_j$, and for all $k\notin \{i,j\}$, $b_k = a_k$.
\end{enumerate}
\end{thm}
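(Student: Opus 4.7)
The statement breaks into two inclusions: every application of move (1) or move (2) yields a relation $x \le y$ in the Bruhat-Chevalley order, and conversely every Bruhat-Chevalley comparison is generated by finitely many such applications. I will call these the \emph{geometric} and \emph{combinatorial} directions, respectively.

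For the geometric direction, the plan is to exhibit explicit one-parameter families in $\mat_n$ witnessing each degeneration. A move of type (1) with $a_i = 0$ is realized by right-multiplying $y$ by the diagonal matrix having $t$ in the $i$-th slot and $1$ elsewhere: this is a curve in $ByB$ for $t \ne 0$ whose limit at $t = 0$ zeroes column $i$ and hence equals $x$. A move of type (1) with $0 < a_i < b_i$ is realized by the left multiplication $y \mapsto (I + t E_{a_i,b_i}) y$ -- an element of $B$ since $a_i < b_i$ -- which places $t$ at position $(a_i, i)$ above the $1$ at $(b_i, i)$; rescaling column $i$ by $t^{-1}$ and sending $t \to \infty$ then produces $x$. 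A move of type (2) is handled by the classical $SL_2$-style degeneration used for the symmetric group, applied to the pair of columns $i, j$ and the pair of rows $a_i, a_j$; the argument is insensitive to whether one of $a_i, a_j$ vanishes.

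For the combinatorial direction, I would use a double induction on $\mathrm{rank}(y)$ and on the length of the interval $[x, y]$. Within a fixed rank, Putcha's description of the Bruhat order on a single $J$-class in \cite{Putcha01} reduces the question to Bruhat order on a product of symmetric groups, where a cover is well known to be an inversion-creating transposition -- precisely a move of type (2) with both swapped entries nonzero. To cross between ranks, given $x < y$ with $\mathrm{rank}(x) < \mathrm{rank}(y)$, I would single out a column $i$ of $y$ whose nonzero entry has no matching contribution in $x$ and form $y'$ by zeroing column $i$ of $y$; then $y' \le y$ via a single move of type (1), and it remains to verify $x \le y'$ and invoke the inductive hypothesis.

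The delicate point is the selection of the column $i$: not every column of $y$ outside the support of $x$ can be removed while preserving $y' \ge x$. To justify the existence of a good choice I would first establish a combinatorial length formula on $R_n$ in the spirit of the one announced in the introduction, use it to track how $\ell$ changes under column deletion, and then run an exchange argument comparing the columns of $y$ to those of $x$ to produce a column whose removal strictly decreases $\ell(y)$ without dropping below $\ell(x)$. This length-tracking lemma is, in my estimation, where most of the work would reside, and it is the main obstacle that would need to be overcome before the two-layer induction closes up.
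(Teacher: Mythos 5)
The paper does not prove this statement: it is quoted from \cite{PennellPutchaRenner} and used as the starting point for everything that follows, so there is no internal proof to measure your attempt against. On its own terms, your \emph{geometric} direction is essentially right: the one-parameter degenerations you describe do place $x$ in $\overline{ByB}$ for each generating move (for a type (1) move with $a_i>0$ one should observe that row $a_i$ of $y$ is automatically empty, since $b_j=a_i$ for some $j\neq i$ would force $a_j=a_i\neq 0$, contradicting $x\in R_n$), and $B\times B$-stability of the closure then upgrades $x\in\overline{ByB}$ to $BxB\subseteq\overline{ByB}$.

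The converse direction is where the theorem actually lives, and your plan for it has both a concrete error and an acknowledged hole. The error: covers inside a fixed rank are \emph{not} all of type (2). In $R_2$ the elements $x=(1,0)$ and $y=(2,0)$ both have rank one, $\ell(x)=2$ and $\ell(y)=3$, so $y$ covers $x$, and the relation is a type (1) move with $a_1=1>0$; geometrically $E_{11}=\lim_{s\to 0}\operatorname{diag}(1,s)\,(I+E_{12})\,E_{21}$. Putcha's order on a $J$-class is not the Bruhat order on a product of symmetric groups with transposition covers, so the within-rank half of your induction does not close as stated. The hole: your rank-crossing step requires producing a column of $y$ whose deletion yields $y'$ with $x\le y'\le y$, and you defer the existence of such a column to a ``length-tracking lemma'' that you never formulate or prove. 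Deleting the nonzero entry of column $i$ is in general very far from a cover (it changes $\ell$ by roughly $b_i+n-i$ up to coinversion corrections, cf.\ Theorem \ref{P:length}), and chains realizing $x\le_{PPR}y$ typically interleave rank-preserving and rank-dropping moves, so there is no reason a single clean column deletion is available at each stage. Until that lemma is supplied, the proposal establishes only the easy inclusion of the generated order into the Bruhat-Chevalley order, not the equality asserted by the theorem.
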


For example, let $x = (21403)$ and $y= (35201)$ in $R_5$. Then $x \leq_{PPR} y$ because
\begin{eqnarray*}
 (21403) &\leq_{PPR}& (31402)\ \text{by Theorem \ref{T:PPR} part 2}\\
         &\leq_{PPR}& (34102)\ \text{by Theorem \ref{T:PPR} part 2}\\
         &\leq_{PPR}& (35102)\ \text{by Theorem \ref{T:PPR} part 1}\\
         &\leq_{PPR}& (35201)\ \text{by Theorem \ref{T:PPR} part 2}.
\end{eqnarray*}

\begin{rem}
In Proposition 15.23 of \cite{MS05}, Miller and Sturmfels describe the particular case of Theorem \ref{T:PPR} where $y\in S_n$.
\end{rem}

For the sake of notation, the partial ordering defined by the Theorem
\ref{T:PPR} is denoted by ``$\leq_{PPR}$,'' and refered to as the ``Pennell-Putcha-Renner'' ordering on $R_n$.

Notice that Deodhar's ordering (\ref{D:deodharsdef}) on $S_n$ can be defined verbatim on the rook monoid.

\begin{defn} \label{deodonrn.defn}
The {\em Deodhar ordering} $\leq_D$ on $R_n$ is defined as follows.
\begin{equation}\label{D:deodharonrn.def}
v=(v_1,...,v_n) \leq_D w=(w_1,...,w_n) \iff \widetilde{v(k)} \leq_c \widetilde{w(k)}\ \text{for all}\ k=1,...,n.
\end{equation}
\end{defn}

\begin{example}
Let $x=(4,0,2,3,1)$, and let $y=(4,3,0,5,1)$. Then $x \leq_D y$, because
\begin{eqnarray*}
\widetilde{x(1)}=(4) &\leq_c& \widetilde{y(1)}=(4) ,\\
\widetilde{x(2)}=(4,0) &\leq_c& \widetilde{y(2)}=(4,3), \\
\widetilde{x(3)}=(4,2,0) &\leq_c& \widetilde{y(3)}=(4,3,0), \\
\widetilde{x(4)}=(4,3,2,0) &\leq_c& \widetilde{y(4)}=(5,4,3,0), \\
\widetilde{x(5)}=(4,3,2,1,0) &\leq_c& \widetilde{y(5)}=(5,4,3,1,0).
\end{eqnarray*}
\end{example}

The main theorem of this article is that, on $R_n$, the Deodhar ordering and the Pennell-Putcha-Renner ordering are identical.

The organization of the paper is as follows.  In Section \ref{S:lengthfunction}, we study the length function on $R_n$. We show that
\begin{thm} \label{P:length}
Let $x = (a_1,...,a_n)  \in R_n$. Then, the dimension $\ell(x)=\dim(BxB)$ of the  orbit $B x B$, is given by
\begin{equation}
\ell(x)  = (\sum_{i=1}^n a_i^*)  - coinv(x),\ \text{where}\
a_i^* =
\begin{cases}
a_i+n-i,  & \text{if}\  a_i\neq 0, \\
0,   & \text{if}\ a_i=0.
\end{cases}
\end{equation}
\end{thm}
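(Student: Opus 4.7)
The plan is to compute $\ell(x) = \dim(BxB)$ via the orbit--stabilizer theorem applied to the $B \times B$-action on $M_n$ given by $(b_1, b_2) \cdot M = b_1 M b_2^{-1}$. Since the orbit of $x$ under this action is $BxB$, the problem reduces to computing $\dim \mathrm{Stab}(x)$, where $\mathrm{Stab}(x) = \{(b_1, b_2) \in B \times B : b_1 x = x b_2\}$, via $\dim(BxB) = 2 \dim B - \dim \mathrm{Stab}(x)$.

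Set $I = \{j : a_j \neq 0\}$, $J = \{a_j : j \in I\}$, and $r = |I|$, so that $a \colon I \to J$ is a bijection. Expanding $b_1 x = x b_2$ column by column produces three families of scalar conditions on the entries of $(b_1, b_2)$: (A) $(b_1)_{i,a_j} = (b_2)_{a^{-1}(i), j}$ for $j \in I$, $i \in J$; (B) $(b_1)_{i,a_j} = 0$ for $j \in I$, $i \notin J$; and (C) $(b_2)_{a^{-1}(i), j} = 0$ for $j \notin I$, $i \in J$. Together with the upper-triangularity of $b_1, b_2$, which forces $(b_k)_{pq} = 0$ whenever $p > q$, these conditions cut out the stabilizer.

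The heart of the argument is to count how many of these equations are independent and non-trivial. A condition in (A) is automatically satisfied precisely when both $i > a_j$ and $a^{-1}(i) > j$; the substitution $i' = a^{-1}(i)$ identifies the set of such pairs with $\{(j, i') \in I \times I : j < i' \text{ and } a_j < a_{i'}\}$, which is exactly $\text{coinv}(x)$. All remaining non-trivial conditions act on pairwise disjoint matrix entries: the relevant $(b_1)$-entries in (A) have row index in $J$ while those in (B) have row index outside $J$, and similarly (A) versus (C) distinguishes $(b_2)$-entries by whether the column index lies in $I$. Thus the constraints are linearly independent, and tallying gives
\[
\dim(BxB) \;=\; r^2 + N_B + N_C - \text{coinv}(x),
\]
where $N_B = \#\{(j,i) : j \in I,\ i \notin J,\ i \leq a_j\}$ and $N_C = \#\{(j,i) : j \in I,\ i \notin I,\ j < i\}$.

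To match this with the stated formula, I would split $\sum_{j \in I} a_j = \sum_{j \in I} \#\{i : 1 \leq i \leq a_j\}$ according to whether the auxiliary index lies in $J$ or its complement, obtaining $\sum_{j \in I} a_j = N_B + \sum_{j \in I}\#\{i \in J : i \leq a_j\}$; the latter sum equals $1 + 2 + \cdots + r = \binom{r+1}{2}$ because $a \colon I \to J$ is a bijection. A parallel splitting of $\sum_{j \in I}(n - j)$ gives $\binom{r}{2} + N_C$. Adding these and using $\binom{r+1}{2} + \binom{r}{2} = r^2$ yields $r^2 + N_B + N_C = \sum_{j \in I}(a_j + n - j) = \sum_{i=1}^n a_i^{\ast}$, completing the argument. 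The main obstacle will be the careful bookkeeping in the stabilizer count: correctly identifying which equations in (A) become trivial because both sides are forced to vanish by the upper-triangular structure, and verifying the disjointness claim that underlies linear independence, so that no reduction in dimension is counted twice.
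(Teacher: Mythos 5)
Your proof is correct, but it takes a genuinely different route from the paper. The paper invokes the identity $\dim(BxB)=\dim(Bx)+\dim(xB)-\dim(Bx\cap xB)$ (cited from Renner's earlier work on the length function), identifies $\overline{Bx}=\mathbf{T}_nx$ and $\overline{xB}=x\mathbf{T}_n$ as linear spans of elementary matrices $E_{ij}$, and counts lattice points: $\dim(Bx)=\sum a_i$, $\dim(xB)=\sum_{a_i\neq 0}(n-i+1)$, and $\dim(Bx\cap xB)=\mathrm{rank}(x)+coinv(x)$, the coinversions arising as the elements of $R_n^1\cap\mathbf{T}_nx\cap x\mathbf{T}_n$ coming from two distinct nonzero entries of $x$. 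You instead apply orbit--stabilizer to the $B\times B$-action, so $\dim(BxB)=2\dim B-\dim\{(b_1,b_2):b_1x=xb_2\}$, and count the independent linear conditions cutting out the stabilizer inside $\mathbf{T}_n\times\mathbf{T}_n$; here the coinversions appear dually, as the equations $(b_1)_{i,a_j}=(b_2)_{a^{-1}(i),j}$ that are automatically satisfied by upper-triangularity, and your disjoint-support argument for independence and the final identity $r^2+N_B+N_C=\sum a_i^*$ both check out (e.g.\ for $x=(4,0,2,3)$ one gets $9+3+1-1=12$, matching the paper's example). Your version is more self-contained, since it replaces the cited inclusion--exclusion formula for $\dim(BxB)$ with the standard fact that an orbit of an algebraic group has dimension equal to that of the group minus that of the stabilizer (note the stabilizer is a dense open subset of a linear subspace, as no diagonal entry is forced to vanish, so the codimension count is legitimate); the price is heavier bookkeeping and the extra binomial identity $\binom{r+1}{2}+\binom{r}{2}=r^2$ at the end, whereas the paper's decomposition keeps $\dim(Bx)$ and $\dim(xB)$ individually meaningful.
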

In Section \ref{S:lemmas}, we prove two lemmas, which sharpen the theorem of Pennel, Putcha and Renner.  In Section \ref{S:another}, we find an equivalent description of the Deodhar's ordering.
Finally, in Section \ref{S:final}, we prove that
\begin{thm}
The Deodhar ordering $\leq_D$ on $R_n$ is the same as the Pennell-Putcha-Renner $\leq_{PPR}$ ordering on $R_n$.
\end{thm}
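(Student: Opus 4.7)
The plan is to prove the two inclusions $\leq_{PPR}\,\subseteq\,\leq_D$ and $\leq_D\,\subseteq\,\leq_{PPR}$ separately, with the first being a direct verification and the second carrying the true content of the theorem.

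For $\leq_{PPR}\,\subseteq\,\leq_D$, note that $\leq_{PPR}$ is by definition the smallest partial order containing the elementary relations of types (1) and (2) from Theorem \ref{T:PPR}, while $\leq_D$ is manifestly a partial order (reflexivity, antisymmetry, and transitivity descend coordinatewise from $\leq_c$ applied to each truncation). Hence it suffices to verify that each elementary relation of type (1) or (2) forces $x \leq_D y$, and this is a direct multiset computation. For type (1), the only affected truncations are those with $k \geq i$; in each such multiset the entry $a_i$ is replaced by the strictly larger $b_i$, so after sorting in non-increasing order every coordinate of $\widetilde{x(k)}$ weakly grows. For type (2), only truncations with $i \leq k < j$ are affected, and there the entry $a_i$ is replaced by $a_j = b_i > a_i$, so again every sorted coordinate can only increase.

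For the reverse inclusion, I would induct on the length difference $\ell(y) - \ell(x)$ supplied by Theorem \ref{P:length} (using the standard fact that length is strictly monotone on the Bruhat order $\leq_{PPR}$). Given $x <_D y$ with $x \neq y$, the goal is to exhibit some $x'$ with $x <_{PPR} x'$ via a single elementary relation and $x' \leq_D y$; then the inductive hypothesis closes the argument. To find $x'$, I would invoke the equivalent formulation of $\leq_D$ developed in Section \ref{S:another} to pinpoint a coordinate of $x$ that falls short of $y$, and then apply whichever elementary move of Theorem \ref{T:PPR} minimally repairs that shortfall: either incrementing a single entry $a_i$ to the next value available from $y$, or swapping a pair $a_i, a_j$ with $i < j$ and $a_i < a_j$ so as to advance the larger one. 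A careful analysis of how the sorted prefix vectors $\widetilde{x(k)}$ change under a single such move then shows that the resulting $x'$ still satisfies $\widetilde{x'(k)} \leq_c \widetilde{y(k)}$ for every $k$.

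The main obstacle is the existence of such an $x'$ in every strict case. The delicate step is to argue that if no type (1) increment of $x$ preserves the Deodhar bound, then some type (2) transposition must; that is, whenever $x <_D y$ and no entry of $x$ can be pushed up while remaining dominated by $y$ on every truncation, there is necessarily a position carrying a value which is ``too small'' together with a later position carrying a value which is ``too large'' that can be exchanged. The reformulation of $\leq_D$ in Section \ref{S:another} should be tailored precisely to make this choice canonical and the subsequent verification on all truncations routine, and Theorem \ref{P:length} guarantees that each such move strictly increases the length so that the induction terminates.
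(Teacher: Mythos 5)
Your first inclusion, $\leq_{PPR}\subseteq\leq_D$, is correct and in fact slightly cleaner than what the paper does: since $\leq_{PPR}$ is by definition the partial order generated by the elementary relations of Theorem \ref{T:PPR}, and $\leq_D$ is a partial order, it really does suffice to check the generators, and your multiset argument (replacing one entry of each affected truncation by a strictly larger value can only increase the sorted vector coordinatewise) is sound. The problem is the second inclusion, which is where all the content of the theorem lives and where your argument stops being a proof. The entire difficulty is the existence, for $x<_D y$, of an element $x'$ obtained from $x$ by a single elementary move with $x'\leq_D y$; you name this as ``the main obstacle'' and ``the delicate step'' and then assert that ``a careful analysis \dots shows'' it works. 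That analysis is precisely the paper's Lemmas \ref{L:Dcovers1} and \ref{L:Dcovers2}: one takes the smallest index $i$ with $a_i<b_i$, splits according to whether $b_i$ occurs among $a_{i+1},\dots,a_n$ or not, chooses a carefully minimal partner index $r'$ or $t'$ (or raises $a_i$ in place), and then verifies $\Gamma(z(k),c_m)\leq\Gamma(y(k),c_m)$ on every truncation $k$ strictly between the two modified positions --- a genuinely nontrivial case analysis using the reformulation of Section \ref{S:another}. Without carrying this out, you have restated the theorem rather than proved it.

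There is also a technical flaw in your induction scheme. You induct on $\ell(y)-\ell(x)$ for $x<_D y$, but strict monotonicity of $\ell$ is only known along $\leq_{PPR}$ (where it is the grading); that it is strictly monotone along $\leq_D$ is part of what is being proved, so you cannot assume $\ell(y)-\ell(x)>0$, nor that $\ell(x')\leq\ell(y)$ for your intermediate $x'\leq_D y$. This is repairable --- for instance, induct on the nonnegative integer $\sum_{k}\sum_{j}\bigl(\widetilde{y(k)}_j-\widetilde{x(k)}_j\bigr)$, which strictly decreases when you pass from $x$ to $x'$ --- or restructure the argument as the paper does, by comparing covering relations directly: Propositions \ref{P:Deodharcovering0} and \ref{P:Deodharcovering1} show every $\leq_{PPR}$-cover is a $\leq_D$-cover, Proposition \ref{P:Dcovers} shows the converse, and two finite partial orders with identical covers coincide. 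Either way, the case analysis producing $x'$ (or $z$) must actually be written down; as it stands your proposal defers exactly the part of the argument that the two key lemmas of Section \ref{S:lemmas} and the four lemmas of the final section were written to supply.
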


\section{\textbf{the length function.}}\label{S:lengthfunction}

It is well known that the symmetric group $S_n$ is a graded poset, grading given by the length function
\begin{equation}\label{E:lengthsymmetric}
\ell(w)= \dim (B w B)=inv(w)+\dim(B)=inv(w)+{n+1\choose 2},
\end{equation}
where $w \in S_n$, and
\begin{equation}\label{E:invpermutation}
inv(w) = |\{ (i,j):\ 1\leq i < j \leq n,\ w_i>w_j \}|.
\end{equation}

In \cite{Renner86}, it is shown that the rook monoid is a graded poset, with respect to the length function
\begin{equation}
\ell(\sigma)= \dim (B\sigma B),\ \sigma \in R_n.
\end{equation}
In this section we give a combinatorial formula, similar to (\ref{E:lengthsymmetric}), for the length function on $R_n$.

Let $R_n^1$ be the set of all rank one elements of $R_n$. We denote the elements of $R_n^1$ by $E_{ij}=(e_{rs}) \in R_n$, where
\begin{equation*}
e_{rs} =
\begin{cases}
1,  &\text{if $r=i$, and $s=j$,}\\
0,   &\text{otherwise.}
\end{cases}
\end{equation*}
Let $\mathbf{T}_n$ be the set of all upper triangular matrices in $\mathbf{M}_n$.

\begin{lem}\label{L:dim1}
Let $B$ be the Borel subgroup of invertible upper triangular matrices, and let $x=(x_{rs})$ be an element of $R_n$. Then, the dimension $\dim(Bx)$ is equal to the the dimension of the linear subspace $\mathbf{T}_n x$ of $\mathbf{M}_n$, which is spanned by the following set;
\begin{equation*}
\{E_{ij}\in R_n^1:\ there\ exists\ a\ nonzero\  entry\ x_{rs}\ of\ x\ with\ s=j\ and\ r \geq i
\}.
\end{equation*}
\end{lem}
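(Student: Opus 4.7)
The plan is to prove the lemma in two steps: first reduce the computation of $\dim(Bx)$ to the computation of $\dim(\mathbf{T}_n x)$, then compute the latter explicitly by analyzing the product $tx$ column by column.

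\textbf{Step 1: Reduce to the linear subspace.} The Borel $B$ is the open dense subset of invertible elements in $\mathbf{T}_n$, and the map $\varphi : \mathbf{T}_n \to \mathbf{M}_n$ defined by $\varphi(t) = tx$ is linear, hence a morphism of affine varieties. Since $\mathbf{T}_n x = \varphi(\mathbf{T}_n)$ is a linear subspace of $\mathbf{M}_n$ (in particular, closed in the Zariski topology) and $Bx \subseteq \mathbf{T}_n x$, we have $\overline{Bx} \subseteq \mathbf{T}_n x$. On the other hand, $\varphi(\overline{B}) = \varphi(\mathbf{T}_n) = \mathbf{T}_n x \subseteq \overline{\varphi(B)} = \overline{Bx}$, because $\varphi$ is continuous. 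Thus $\overline{Bx} = \mathbf{T}_n x$, so $\dim(Bx) = \dim(\mathbf{T}_n x)$.

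\textbf{Step 2: Identify the span of $\mathbf{T}_n x$.} Let $J = \{s : \text{column } s \text{ of } x \text{ is nonzero}\}$, and for $s \in J$ let $r(s)$ denote the unique row index with $x_{r(s),s} = 1$; equivalently, the $s$-th column of $x$ is the standard basis vector $e_{r(s)}$. For $t \in \mathbf{T}_n$, the $s$-th column of $tx$ equals $t$ times the $s$-th column of $x$. If $s \notin J$ this column is zero. If $s \in J$, it equals $t e_{r(s)}$, the $r(s)$-th column of $t$, which by upper-triangularity of $t$ is a vector whose entries below row $r(s)$ vanish and whose entries in rows $1, \ldots, r(s)$ are the free parameters $t_{1,r(s)}, \ldots, t_{r(s),r(s)}$. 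Varying $t$, we see that $\mathbf{T}_n x$ is spanned by the matrix units $E_{ij}$ with $j \in J$ and $1 \le i \le r(j)$; and these matrix units are clearly linearly independent. Translating the condition back, the pairs $(i, j)$ appearing are exactly those for which there is a nonzero entry $x_{rs}$ of $x$ with $s = j$ and $r = r(j) \geq i$, which matches the statement of the lemma.

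\textbf{Main obstacle.} There is no deep difficulty here; the lemma is essentially a bookkeeping statement. The only subtlety is the density argument in Step 1 (ensuring $\overline{Bx} = \mathbf{T}_n x$), and this is immediate once one notes that the linear map $\varphi$ automatically has closed image. The rest is a direct computation using the fact that $x$ has at most one nonzero entry per column, so $tx$ splits cleanly into independent column contributions.
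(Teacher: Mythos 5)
Your proof is correct and follows essentially the same route as the paper: reduce $\dim(Bx)$ to $\dim(\mathbf{T}_n x)$ via $\overline{Bx}=\overline{B}x=\mathbf{T}_n x$, then read off the spanning matrix units by multiplying out $tx$ column by column. You merely make explicit two steps the paper leaves as "clear" (the density/closed-image argument and the matrix multiplication), so there is nothing to add.
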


\begin{proof}
The linearity of $\mathbf{T}_n x \subset \mathbf{M}_n$ is clear. Since $\overline{Bx} = \overline{B} x = \mathbf{T}_n x$, and since the geometric dimension of a linear space is the same as its vector space dimension, $\dim (Bx) = \dim (\overline{B x }) = \dim (\mathbf{T}_n x)$. It is easy to see that, $\mathbf{T}_n x$ is spanned by  $R_n^1 \cap \mathbf{T}_n x$. Matrix multiplication shows that $E_{i,j} \in R_n^1 \cap \mathbf{T}_n x$ if and only if there exists a nonzero entry $x_{rs}$ of $x$ with $r\geq i$ and $s=j$.
\end{proof}

\begin{lem}\label{L:dim2}
Let $B$ be the Borel subgroup of invertible upper triangular matrices, and let $x=(x_{rs})$ be an element of $R_n$. Then, the dimension $\dim(xB)$ is equal to the the dimension of the linear subspace $x\mathbf{T}_n$ of $\mathbf{M}_n$, which is spanned by the following set;
\begin{equation*}
\{E_{ij}\in R_n^1:\ there\ exists\ a\ nonzero\  entry\ x_{rs}\ of\ x\ with\ r=i\ and\ s \leq j
\}.
\end{equation*}
\end{lem}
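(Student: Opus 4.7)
The plan is to mirror the proof of Lemma \ref{L:dim1} with left/right swapped. First I would observe that since $B$ is Zariski dense in $\mathbf{T}_n$, left multiplication by the fixed element $x$ is continuous and sends $B$ into a constructible set, so $\overline{xB}=x\overline{B}=x\mathbf{T}_n$. This is a linear subspace of $\mathbf{M}_n$, and since the geometric dimension of a linear subspace agrees with its vector-space dimension, $\dim(xB)=\dim(\overline{xB})=\dim(x\mathbf{T}_n)$.

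Next I would identify a spanning set of $x\mathbf{T}_n$ inside $R_n^1$. The matrices $E_{sj}$ with $s\leq j$ span $\mathbf{T}_n$, so the products $xE_{sj}$ span $x\mathbf{T}_n$. Since $x\in R_n$, column $s$ of $x$ is either zero or the standard basis vector $e_r$ for the unique $r$ with $x_{rs}=1$; matrix multiplication then yields $xE_{sj}=E_{rj}$ in the latter case and $xE_{sj}=0$ in the former. Therefore $x\mathbf{T}_n$ is spanned by the set of elementary matrices
\begin{equation*}
\{E_{ij}\in R_n^1:\ \text{there exists a nonzero entry}\ x_{rs}\ \text{of}\ x\ \text{with}\ r=i\ \text{and}\ s\leq j\}.
\end{equation*}

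Finally, the elementary matrices $E_{ij}$ are linearly independent in $\mathbf{M}_n$, so this spanning set is actually a basis, completing the proof. The only step that requires any care is the first one, verifying $\overline{xB}=x\mathbf{T}_n$; this is essentially identical to the corresponding step in Lemma \ref{L:dim1}, using that $\overline{B}=\mathbf{T}_n$ and that left translation by $x$ commutes with Zariski closure of the orbit $B$ inside $GL_n\subset\mathbf{M}_n$. No genuine obstacle is expected, as this lemma is the exact right-handed analogue of Lemma \ref{L:dim1}.
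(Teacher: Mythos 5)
Your proof is correct and follows essentially the same route as the paper, which simply declares this lemma's proof ``identical to the proof of Lemma \ref{L:dim1}''; you have carried out exactly that left/right mirror image, including the computation $xE_{sj}=E_{rj}$ when $x_{rs}=1$ and the identification $\overline{xB}=x\mathbf{T}_n$.
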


\begin{proof}
Identical to the proof of Lemma \ref{L:dim1}.
\end{proof}

\begin{example}\label{e:4023}
Let $x\in R_4$ be given by the matrix
\begin{equation*}
x= \begin{pmatrix}
0 & 0 & 0 & 0 \\
0 & 0 & 1 & 0 \\
0 & 0 & 0 & 1 \\
1 & 0 & 0 & 0
\end{pmatrix}.
\end{equation*}
Then, a generic element of $\mathbf{T}_4x$ is of the form
\begin{equation*}
\begin{pmatrix}
a_{11} & a_{12} & a_{13} & a_{14} \\
0 & a_{22} & a_{23} & a_{24} \\
0 & 0 & a_{33} & a_{34} \\
0 & 0 & 0 & a_{44}
\end{pmatrix}
\begin{pmatrix}
0 & 0 & 0 & 0 \\
0 & 0 & 1 & 0 \\
0 & 0 & 0 & 1 \\
1 & 0 & 0 & 0
\end{pmatrix}
 = \begin{pmatrix}
a_{14} & 0 & a_{12} & a_{13} \\
a_{24} & 0 & a_{22} & a_{23} \\
a_{34} & 0 & 0 & a_{33} \\
a_{44} & 0 & 0 & 0
\end{pmatrix},
\end{equation*}
for some $a_{ij}\in \F$. Therefore, $\dim (\mathbf{T}_4 x)= 9$.
Similarly, an arbitrary element of $x\mathbf{T}_4$ is of the form
\begin{equation*}
\begin{pmatrix}
0 & 0 & 0 & 0 \\
0 & 0 & 1 & 0 \\
0 & 0 & 0 & 1 \\
1 & 0 & 0 & 0
\end{pmatrix}
\begin{pmatrix}
b_{11} & b_{12} & b_{13} & b_{14} \\
0 & b_{22} & b_{23} & b_{24} \\
0 & 0 & b_{33} & b_{34} \\
0 & 0 & 0 & b_{44}
\end{pmatrix}
 = \begin{pmatrix}
0 & 0 & 0 & 0 \\
0 & 0 & b_{33} & b_{34} \\
0 & 0 & 0 & b_{44} \\
b_{11} & b_{12} & b_{13} & b_{14}
\end{pmatrix},
\end{equation*}
for some $b_{ij}\in \F$.
Thus $\dim (x \mathbf{T}_4)= 7$.
\end{example}

\begin{rem}\label{R:oneline}
Let $x=(a_1,...,a_n)$ be the ``one line" representation for $x=(x_{rs})\in R_n$, as in \ref{E:oneline}.
If $a_i\neq 0$ for some $i\in \{1,...,n\}$, then $a_i$ is the row index of a nonzero entry $x_{a_i i}$ of $x$. Therefore, $E_{r,s} \in R_n^1 \cap \mathbf{T}_n x$ if and only if there exists a nonzero entry of $x$ at the position $(a_i,i)$ with $s=i$ and $r\geq  a_i$. Similarly, $E_{r,s} \in R_n^1 \cap x \mathbf{T}_n$ if and only if there exists a nonzero entry of $x$ at the position $(a_j,j)$ with $r=a_j$ and $s \leq j$.
\end{rem}

\begin{defn}
Let $x=(a_1,....,a_n)\in R_n$.  A pair $(i,j)$ of indices $1\leq i<j \leq n$ is called a \textit{coinversion pair} for $x$, if $0< a_i < a_j$. By abuse of notation, we use
\textit{coinv} for both the set of coinversion pairs of $x$, as well as its cardinality.
\end{defn}

\begin{example}
Let $x=(4,0,2,3)$. Then, the only coinversion pair for $x$ is $(3,4)$. Therefore, $coinv(x)=1$.
\end{example}

\begin{thm} \label{T:length}
Let $x = (a_1,...,a_n)  \in R_n$. Then, the dimension, $\ell(x)=\dim(BxB)$ of the  orbit $B x B$ is given by
\begin{equation}
\ell(x)  = (\sum_{i=1}^n a_i^*)  - coinv(x),\ \text{where}\
a_i^* =
\begin{cases}
a_i+n-i,  & \text{if}\  a_i\neq 0 \\
0,   & \text{if}\ a_i=0
\end{cases}
\end{equation}
\end{thm}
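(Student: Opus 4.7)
The plan is to compute $\dim(BxB)$ via a tangent-space calculation. Since $BxB$ is a $(B\times B)$-orbit on $\mathbf{M}_n$ under the action $(b_1,b_2)\cdot y = b_1 y b_2^{-1}$, it is smooth, and its tangent space at $x$ is the image of the differential of the orbit map at the identity. Because $\mathrm{Lie}(B)=\mathbf{T}_n$, this image is $\mathbf{T}_n x + x\mathbf{T}_n$, so $\dim(BxB)=\dim(\mathbf{T}_n x + x\mathbf{T}_n)$. Since both summands are linear subspaces of $\mathbf{M}_n$, inclusion-exclusion yields
\begin{equation*}
\dim(BxB) = \dim(\mathbf{T}_n x) + \dim(x\mathbf{T}_n) - \dim(\mathbf{T}_n x \cap x\mathbf{T}_n).
\end{equation*}

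The key observation is that Lemmas \ref{L:dim1} and \ref{L:dim2} describe $\mathbf{T}_n x$ and $x\mathbf{T}_n$ as \emph{coordinate} subspaces of $\mathbf{M}_n$, each spanned by an explicit subset of the rook basis $\{E_{ij}\}=R_n^1$. It follows that $\mathbf{T}_n x \cap x\mathbf{T}_n$ is again a coordinate subspace, and its dimension equals the size of the intersection of the two support sets. Setting $J=\{j:a_j\neq 0\}$, these supports are $S_1=\{(i,j):j\in J,\ i\leq a_j\}$ and $S_2=\{(a_m,j):m\in J,\ j\geq m\}$, with $|S_1|=\sum_{j\in J}a_j$ and $|S_2|=\sum_{j\in J}(n-j+1)$.

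It then remains to count $|S_1\cap S_2|$. A pair $(a_m,j)\in S_1\cap S_2$ requires $m,j\in J$, $m\leq j$, and $a_m\leq a_j$. The case $m=j$ contributes $|J|$; for $m<j$, the rook condition forces $a_m\neq a_j$, so $a_m\leq a_j$ becomes $a_m<a_j$, which is precisely the coinversion count $\mathrm{coinv}(x)$. Hence $|S_1\cap S_2|=|J|+\mathrm{coinv}(x)$, and combining everything gives
\begin{equation*}
\dim(BxB) = \sum_{j\in J}a_j + \sum_{j\in J}(n-j+1) - |J| - \mathrm{coinv}(x) = \sum_{j\in J}(a_j+n-j) - \mathrm{coinv}(x) = \sum_{i=1}^n a_i^* - \mathrm{coinv}(x).
\end{equation*}
The main technical point is the initial tangent-space identification, which rests on the smoothness of the $(B\times B)$-orbit and a routine differential calculation; the rest is careful support bookkeeping using the two preceding lemmas.
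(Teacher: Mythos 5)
Your argument is correct and is essentially the paper's own proof: the same inclusion--exclusion $\dim(BxB)=\dim(Bx)+\dim(xB)-\dim(Bx\cap xB)$, the same use of Lemmas \ref{L:dim1} and \ref{L:dim2} to read off the two supports of sizes $\sum_{j\in J}a_j$ and $\sum_{j\in J}(n-j+1)$, and the same count of the intersection as $\mathrm{rank}(x)+coinv(x)$ by splitting into the diagonal case $m=j$ and the case $m<j$, which forces $0<a_m<a_j$. The only divergence is that you justify the dimension identity by identifying $T_x(BxB)$ with $\mathbf{T}_n x + x\mathbf{T}_n$, which requires separability of the orbit map and hence deserves a word of care over an algebraically closed field of positive characteristic (the setting of the paper), whereas the paper sidesteps this by observing that the relevant closures $\overline{Bx}=\mathbf{T}_nx$ and $\overline{xB}=x\mathbf{T}_n$ are linear subspaces and citing \cite{Renner95} for the formula $\dim(BxB)=\dim(Bx)+\dim(xB)-\dim(Bx\cap xB)$.
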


\begin{proof}
Recall from \cite{Renner95} that the dimension of the orbit $BxB$ can be calculated
by
\begin{equation}\label{E:dimformula}
\dim (BxB) = \dim (Bx) + \dim (xB) - \dim (Bx \cap xB).
\end{equation}
By Lemma \ref{L:dim1}, $\dim (Bx)$ is the number of positions on or above some nonzero entry of the matrix $x\in R_n$. In other words, by the Remark \ref{R:oneline}, if $x=(a_1,...,a_n)$, then  $\sum_{i=1}^n a_i$ is equal to $\dim(Br)$.

Similarly, by Lemma \ref{L:dim2}, $\dim (xB)$ is the number of positions on or to the right of some nonzero entry of $x$. The number of positions on and to the right of the nonzero entry at the $(a_i,i)$'th position of the matrix $x$ is equal to $n-i+1$. This shows that
\begin{equation*}
\dim (Bx) + \dim (xB) = \sum_{i=1}^n \overline{a_i},
\end{equation*}
where
\begin{equation*}
\overline{a_i} =
\begin{cases}
a_i+n-i+1,  & \text{if}\  a_i\neq 0, \\
0,   & \text{if}\ a_i=0.
\end{cases}
\end{equation*}
The number of nonzero entries of $x$ is denoted by $rank(x)$.
Thus, we have
\begin{equation*}
\dim (Bx) + \dim (xB) = \sum_{i=1}^n a_i^* + rank(x),
\end{equation*}
where
\begin{equation*}
a_i^* =
\begin{cases}
a_i+n-i,  & \text{if}\  a_i\neq 0, \\
0,   & \text{if}\ a_i=0.
\end{cases}
\end{equation*}

Therefore, it is enough to prove that
\begin{equation*}
\dim (Bx \cap xB) = rank(x) + coinv((a_1,....,a_n)).
\end{equation*}

By a similar argument as in the proof of Lemma \ref{L:dim1}, the dimension of $Bx \cap x B$ is equal to $\dim( \mathbf{T}_n x \cap x \mathbf{T}_n)$, which is equal to the cardinality of the set $R_n^1 \cap \mathbf{T}_n x \cap x \mathbf{T}_n$.

Let $E_{rs}\in R_n^1 \cap \mathbf{T}_n x \cap x \mathbf{T}_n$ be a rank 1 element whose nonzero entry is at the $(r,s)$'th position. By the Remark \ref{R:oneline}, $E_{rs} \in R_n^1 \cap \mathbf{T}_n x \cap x \mathbf{T}_n$ if and only if there exist nonzero entries of $x$ at the positions $(a_i,i)$ and $(a_j,j)$ such that $r\geq a_i,\ s=i$ and $r=a_j,\ s \leq j$. We have two possibilities. Either $(a_i,i)=(a_j,j)$, or not.
Clearly, the number of times that the equality $(a_i,i)=(a_j,j)$ holds true is equal to the $rank(x)$. On the other hand, if $(a_i,i)\neq (a_j,j)$, then we see that $i < j$ and $0<a_i<a_j$. Therefore, the number of times that $(a_i,i)\neq (a_j,j)$, is equal to the number of coinversions of the sequence $(a_1,...,a_n)$. Therefore,
\begin{equation*}
\dim (Bx \cap xB) = |R_n^1 \cap \mathbf{T}_n x \cap x \mathbf{T}_n| =rank(x) + coinv((a_1,....,a_n)).
\end{equation*}
\end{proof}

\begin{rem}
Let $x=(a_1,...,a_n) \in R_n$ be a permutation. Then
\begin{eqnarray*}
\ell(x) &=& (\sum_{i=1}^n a_i+n-i) - coinv(x)\\
&=&{n+1 \choose 2}+{n\choose 2} - coinv(x)\\
&=& {n+1 \choose 2}+inv(x),\\
\end{eqnarray*}
which agrees with the formula (\ref{E:lengthsymmetric}).
\end{rem}

\begin{example}
We continue with the notation of the example \ref{e:4023}. The generic element of $\mathbf{T}_4x \cap x\mathbf{T}_4$ has the form
\begin{equation*}
\begin{pmatrix}
0 & 0 & 0 & 0 \\
0 & 0 & * & * \\
0 & 0 & 0 & * \\
*  &  0  &  0 & 0
\end{pmatrix},
\end{equation*}
where $*$ denotes an arbitrary element of $\F$. Therefore, $\dim(\mathbf{T}_4 x \cap x \mathbf{T}_4) = 4$, and by formula \ref{E:dimformula}, we have
$\dim(BxB) = 9+7 - 4 = 12$. On the other hand, $x$ is represented in ``one line" notation by $(4,0,2,3)$, and by Theorem \ref{P:length} we have
\begin{equation*}
\ell(x) = (4+4-1)+(2+4-3)+(3+4-4)-1=12.
\end{equation*}
\end{example}

\section{\textbf{Two important lemmas.}}\label{S:lemmas}

Recall that we denote the Bruhat-Chevalley ordering on $R_n$, as in Theorem \ref{T:PPR}, by $\leq_{PPR}$. The following two lemmas are critical for deciding if $x \leq_{PPR} y$ is a covering relation.

\begin{lem} \label{L:PPRcovering0}
Let $x=(a_1,...,a_n)$ and $y = (b_1,...,b_n)$ be elements of $R_n$. Suppose that $a_k=b_k$ for all $k =\{ 1,...,\widehat{i},...,n\}$ and $a_i < b_i$. Then, $\ell(y) = \ell( x)+ 1$ if and only if either
\begin{enumerate}
\item $b_i = a_i +1$, or
\item there exists a sequence of indices $1 \leq j_1< \cdots < j_s < i$ such that the set $\{a_{j_1},...,a_{j_s}\}$ is equal to $\{a_i+1,...,a_i+s\}$, and $b_i=a_i+s+1$.
\end{enumerate}
\end{lem}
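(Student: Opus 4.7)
My plan is to apply the length formula of Theorem \ref{T:length} directly and identify exactly when $\ell(y)-\ell(x)=1$. Since $a_k=b_k$ for every $k\neq i$, almost all terms in the two length expressions cancel, and the problem reduces to an analysis of position $i$ together with a coinversion count.

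First I would record that
\[
\ell(y)-\ell(x) = (b_i^* - a_i^*) - (coinv(y)-coinv(x)).
\]
Assuming $a_i\neq 0$ (so $b_i\neq 0$ as well), one has $b_i^* - a_i^* = b_i - a_i$. For the coinversion term I would split the pairs involving $i$ into those $(k,i)$ with $k<i$ and those $(i,k)$ with $i<k$, writing out in each case what it means to contribute to $coinv(x)$ versus $coinv(y)$. Using that the nonzero coordinates of an element of $R_n$ are pairwise distinct (so $b_i$ does not appear elsewhere in $y$ and no $a_k$ equals $a_i$), one obtains
\[
coinv(y)-coinv(x) = L - R,
\]
where, setting $S=\{a_i+1,\ldots,b_i-1\}$, I put $L = |\{k<i : a_k\in S\}|$ and $R = |\{k>i : a_k\in S\}|$.

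The rest is arithmetic. The trivial bound $L-R \le L \le |S| = b_i-a_i-1$ gives
\[
\ell(y)-\ell(x) = (b_i-a_i) - (L-R) \ge 1,
\]
with equality if and only if $L=|S|$ and $R=0$, i.e., every element of $S$ appears in $(a_1,\ldots,a_{i-1})$ and none appears in $(a_{i+1},\ldots,a_n)$. Splitting by whether $|S|=0$ or $|S|=s\ge 1$ yields precisely cases (1) and (2) of the lemma.

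The only genuine obstacle is the careful bookkeeping of the coinversion change: one must keep track of which pairs are gained and which are lost in passing from $x$ to $y$, and use the rook-monoid constraint (distinct nonzero coordinates) to justify the strict-versus-nonstrict inequalities at the ``boundary'' values $a_i$ and $b_i$. Once this accounting is in place, the iff-statement drops out in one line.
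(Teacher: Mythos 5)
Your computation is correct where it applies, and it is essentially the paper's argument: both proofs reduce $\ell(y)-\ell(x)$ to $(b_i^*-a_i^*)-(coinv(y)-coinv(x))$ and then track which pairs involving position $i$ gain or lose coinversion status. Your packaging is slightly cleaner, since you obtain both directions of the equivalence at once from the single inequality $L-R\le |S|$, whereas the paper argues the two implications separately with the same counts (its $n_1,n_2$ are your $L,R$).

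The one genuine loose end is the clause ``assuming $a_i\neq 0$,'' which you never discharge. When $a_i=0$ the cancellation $b_i^*-a_i^*=b_i-a_i$ fails: one gets $b_i^*-a_i^*=b_i+n-i$, there are no lost coinversions, but every pair $(i,l)$ with $l>i$ and $a_l>b_i$ becomes a \emph{new} coinversion, so that $\ell(y)-\ell(x)=b_i+n-i-|\{k<i:\,0<a_k<b_i\}|-|\{l>i:\,a_l>b_i\}|$. This equals $1$ only if, in addition to conditions (1)/(2), every entry after position $i$ exceeds $b_i$. For instance $x=(0,0)$ and $y=(1,0)$ in $R_2$ satisfy condition (1), yet $\ell(x)=0$ and $\ell(y)=2$, with $(0,1)$ lying strictly between them; so the statement as literally written needs a supplementary hypothesis in the $a_i=0$ case. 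To be fair, the paper's own proof makes the same silent assumption (it uses $a_i^*=a_i+n-i$ throughout), so this is as much a defect of the lemma as of your argument; but a complete write-up should either treat $a_i=0$ separately or record the needed amendment.
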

\begin{proof}
Note that by the hypotheses of the lemma, Theorem \ref{T:PPR} implies that $ x \leq_{PPR} y$.
We first show that if (1) or (2) holds, then $\ell(y)= \ell(x)+1$, in other words $y$ covers $x$.

If $b_i = a_i +1$, then by the Theorem \ref{T:length} the lemma follows. So, we assume that there exists a sequence of indices $1 \leq j_1< \cdots < j_s < i$ such that the set $\{a_{j_1},...,a_{j_s}\}$ is equal to $\{a_i+1,...,a_i+s\}$, and $b_i=a_i+s+1$.  Then,
\begin{eqnarray*}
\ell(y) &=&  \sum_{j=1}^n b_j^* - coinv(y)\\
   &=&  ( \sum_{j=1, j\neq i}^n a_j^* )+ b_i^*  - coinv(y)\\
   &=&  ( \sum_{j=1, j\neq i}^n a_j^* )+ a_i+s+1+n-i  - coinv(y)\\
   &=& ( \sum_{j=1}^n a_j^* )+ s+1 - coinv(y).
\end{eqnarray*}

Now it suffices to show that $coinv(y)=s+coinv(x)$. Observe that, when we replace $a_i$ by $b_i$, the following set of pairs, which are not coinversion pairs for $x$,
\begin{equation*}
\{ (j_k, i)|\ k=1,....,s\},
\end{equation*}
become coinversion pairs for $y$.
Also, upon replacing the entry $a_i$ by $b_i$, a coinversion pair of $x$ of the form $(l,i)$ or $(i,l)$ (where $l\neq j_k$) stays to be a coinversion pair for $y$. Therefore,
\begin{equation*}
coinv(y) = s+ coinv(x),
\end{equation*}
and hence $\ell(y) = \ell(x) +1$.

We proceed to prove the converse statement. Assume that $\ell(y)=\ell(x)+1$.  Since $b_i > a_i$, there exists $d>0$ such that $b_i=a_i+d$. Without loss of generality we may assume that $d>1$. Then the length of $y$ can be computed as follows.
\begin{eqnarray*}
\ell(y) &=&  \sum_{j=1}^n b_j^* - coinv(y)\\
   &=&  ( \sum_{j=1, j\neq i}^n a_j^* )+ b_i^*  - coinv(y)\\
   &=&  ( \sum_{j=1, j\neq i}^n a_j^* )+ a_i+d+n-i  - coinv(y)\\
   &=& ( \sum_{j=1}^n a_j^* )+ d - coinv(y)\\
   &=& \ell(x)+d+coinv(x)-coinv(y).
\end{eqnarray*}
Hence $d+coinv(x)-coinv(y)=1$, or $coinv(y)-coinv(x)=d-1$.
We inspect the difference $coinv(x)-coinv(y)$ more closely.
If $(k,i)$ with $k<i$ is a coinversion for $x$, then it stays to be a coinversion for $y$, as well.
Clearly this is also true for the pairs of the form $(k,l)$ where $k<i<l$, or $i<k<l$, or $k<l<i$.

Therefore, the difference between $coinv(y)$ and $coinv(x)$  occurs at the pairs of the form
\begin{enumerate}
\item $(k,i),\ k<i$ such that $a_i < a_k < b_i$, or
\item $(i,l),\ i<l$, such that $a_i < a_l < b_i$.
\end{enumerate}
In the first case, some new coinversions are added, and in the second case some coinversions are deleted.
Let us call the number of pairs of the first type by $n_1$ and the number of pairs of the second type by $n_2$. Then, $coinv(y) = coinv(x) + n_1-n_2$, or $coinv(y)-coinv(x) = n_1-n_2$. Obviously $0 \leq n_1, n_2 \leq d-1$ (because $b_i=a_i+d$). Hence, we have that $n_1=d-1$, and that $n_2=0$. Therefore, the following is true: any $a_k$ between $a_i$ and $a_i+d=b_i$ appears before the $i$'th position. This completes the proof.
\end{proof}

\begin{example}
Let $x=(4,0,5,0,3,1)$, and let $y=(4,0,5,0,6,1)$. Then $\ell(x)= 21$, and $\ell(y)=22$. Let $z=(6,0,5,0,3,1)$. Then $\ell(z)=23$.
\end{example}

\begin{lem} \label{L:PPRcovering1}
Let $x=(a_1,...,a_n)$ and $y = (b_1,...,b_n)$ be two elements of $R_n$. Suppose that $a_j=b_i,\ a_i = b_j$ and $b_j < b_i$ where $i < j$. Furthermore, suppose that for all $k\in \{1,...\widehat{i},...,\widehat{j},...,n\}$, $a_k=b_k$.
Then, $\ell(y) = \ell( x)+ 1$ if and only if for $s=i+1,...,j-1$, either $a_j< a_s$, or $a_s < a_i$.
\end{lem}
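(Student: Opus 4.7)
The plan is to apply Theorem \ref{T:length} directly to both $x$ and $y$ and compute $\ell(y)-\ell(x)$ by tracking how the starred sum and the coinversion count change under the swap of $a_i$ and $a_j$. Since $x$ and $y$ agree off $\{i,j\}$, the computation localizes to pairs of indices involving $i$ or $j$.

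Step 1 (starred sum). Evaluate $\sum_k (b_k^* - a_k^*)$. If $a_i>0$, then all of $a_i,a_j,b_i,b_j$ are positive, the shifts $n-i$ and $n-j$ redistribute under the swap, and the sum is unchanged. If $a_i=0$ (so $b_j=0$ and $a_j=b_i>0$), then $a_i^*=b_j^*=0$ while $b_i^*-a_j^* = (a_j+n-i)-(a_j+n-j) = j-i$.

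Step 2 (coinv bookkeeping). Partition the index pairs into (a) the pair $(i,j)$ itself; (b) pairs $(k,i)$ and $(k,j)$ with $k<i$; (c) pairs $(i,l)$ and $(j,l)$ with $l>j$; (d) intermediate pairs $(i,s)$ and $(s,j)$ with $i<s<j$; (e) pairs disjoint from $\{i,j\}$. Family (e) contributes $0$. For any fixed $k<i$, the swap interchanges the two thresholds ($a_i$ and $a_j$) against which $a_k$ is compared, so a coinversion gained at $(k,i)$ is lost at $(k,j)$ and vice versa; thus family (b) contributes $0$, and the symmetric argument handles family (c). Only (a) and (d) contribute.

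Step 3 (core case analysis). If $a_i>0$, the pair $(i,j)$ is a coinversion in $x$ but not in $y$ ($-1$ to $coinv(y)-coinv(x)$), and each intermediate $s$ with $a_i<a_s<a_j$ loses both $(i,s)$ and $(s,j)$ as coinversions ($-2$); all other intermediate $s$ contribute nothing. Combined with Step 1,
\[
\ell(y)-\ell(x) = 1 + 2M, \qquad M=|\{\,s:\ i<s<j,\ a_i<a_s<a_j\,\}|,
\]
so $\ell(y)=\ell(x)+1$ iff $M=0$, i.e., each intermediate $a_s$ satisfies $a_s<a_i$ or $a_s>a_j$, matching the stated condition. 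If $a_i=0$, the pair $(i,j)$ is not a coinversion in either; an intermediate $s$ with $0<a_s<a_j$ destroys the coinversion $(s,j)$, while $s$ with $a_s>a_j$ creates $(i,s)$. Letting $N_1,N_3$ count intermediate $s$ with $0<a_s<a_j$ and with $a_s=0$ respectively, Step 1 yields
\[
\ell(y)-\ell(x) = 1 + 2N_1 + N_3,
\]
which equals $1$ iff $N_1=N_3=0$; since $a_s<a_i=0$ is impossible, this is exactly the lemma's condition in the $a_i=0$ case.

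The main obstacle is the clean bookkeeping when $a_i=0$: the coinversion condition requires strict positivity of the earlier coordinate and the starred value vanishes at $0$, so the $a_i=0$ case behaves asymmetrically and intermediate zero entries contribute to $N_3$ rather than to $M$, which is why that case forces a stronger condition than a naive ``adjacent transposition'' heuristic would suggest. The cancellation across families (b) and (c) is routine once arranged in pairs; the intermediate case split, and in particular the separate treatment of zero entries, is where the real work lies.
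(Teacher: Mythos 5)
Your proof is correct and follows essentially the same route as the paper's: apply the length formula of Theorem \ref{T:length} and track how the coinversion count changes under the swap, localizing to pairs of indices that meet $\{i,j\}$. If anything, your bookkeeping is more careful than the paper's on one point: when $a_i=0$ the starred sums genuinely differ, $\sum_k b_k^* - \sum_k a_k^* = j-i$, whereas the paper infers from $\{a_1,\dots,a_n\}=\{b_1,\dots,b_n\}$ that the length difference is governed by coinversions alone (valid only when $a_i>0$); your separate count via $N_1$ and $N_3$, together with the observation that $a_s<a_i=0$ is vacuous, correctly recovers the lemma's condition in that case.
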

\begin{proof}
Suppose that $x$ and $y$ are as in the hypothesis. Also suppose also that $\ell(y) = \ell(x)+1$.  We proceed to show that for $s=i+1,...,j-1$, either $a_j< a_s$, or $a_s < a_i$. Clearly, the sets $\{a_1,...,a_n\}$ and $\{b_1,...,b_n\}$ are equal, hence $\sum_{t=1}^n a_t  = \sum_{t=1}^n b_t $. Therefore, the difference between $\ell(x)$ and $\ell(y)$ is determined by the associated coinversion sets of $x$ and $y$.

Assume that there exists an $s \in \{i+1,...,j-1\}$ such that $a_i < a_s < a_j $. Then, upon interchanging $a_i$ with $a_j$ to get $y$ from $x$, the pairs $(i,s),\ (s,j)$ and $(i,j)$ are no longer coinversions for $y$. This shows that for every $s=i+1,...,j-2$ with $a_i < a_s < a_j$, we obtain that $\ell(y)\geq\ell(x)+2$. This contradicts the assumption that $\ell(y) = \ell(x)+1$. Therefore, there exists no $s \in \{i+1,...,j-1\}$ such that $a_i < a_s < a_j $.

Conversely, assume that for every $s=i+1,...,j-1$, we have $a_i > a_s$ or $a_s > a_j$.  If $a_i > a_s$, then, the pair $(s,j)$ is a coinversion pair for both $x$ and $y$. On the other hand, the pair $(i,s)$ is neither a coinversion for $x$ nor for $y$. Similarly, if $(a_s>a_j)$, then the pair $(i,s)$ is a coinversion pair for both $x$ and $y$. Also, the pair $(s,j)$ is not a coinversion pair for $x$ and neither for $y$. Therefore, we conclude that at any pair of the form $(k,l)$ with $i \leq k < l \leq j$, the coinversion is not affected. It remains to check  pairs of the form $(k,l)$ with either $k <i$, or $j< k$. In the first case, i.e.,  $k<i$, as $a_i$ is interchanged with $a_j$, the contribution of $(k,l)$ to the coinversion situation does not change,  since relative positions of $a_k$ and $a_l$ do not alter. Similarly, in the second case, i.e., $j<k$, since the relative positions of $a_k$ and $a_l$ do not alter, their contribution to coinversion do not change. Therefore, the only coinversion change occurs at the pair $(i,j)$,  and hence, $\ell(y)= \ell(x)+1$.
This completes the proof.
\end{proof}

\begin{example}
Let $x=(2,6,5,0,4,1,7)$, and let $y=(4,6,5,0,2,1,7)$. Then $\ell(x)= 35$, and $\ell(y)=36$. Let $z=(7,6,5,0,4,1,2)$. Then $\ell(z)=42$.
\end{example}

\section{\textbf{Another characterization of $\leq_D$.}}\label{S:another}

As mentioned in the introduction, our goal is to show that
the $\leq_D$ ordering on $R_n$ is the same as to the $\leq_{PPR}$ ordering.  In this section, we find another, useful characterization of the Deodhar ordering.








\begin{defn}
Let $x=(a_1,....,a_n) \in R_n$, and let $r\in \{1,...,n\}$, and finally let $a \in \Z$. We define
\begin{equation*}
\Gamma(x,a) = \{ a_i\in x|\ a_i > a\}.
\end{equation*}
\end{defn}

\begin{rem}\label{R:cposition}
Let $a_i$ be a nonzero entry of $x=(a_1,....,a_n)\in R_n$. Then,
$|\Gamma(x,a_i)|+1$ is the position of $a_i$ in the reordering
$\widetilde{x} = (a_{\alpha_1} \geq \cdots \geq a_{\alpha_n})$ of the entries of $x$. For example,
if $x=(3,0,5,1,0,4)$, then $\widetilde{x}=(5,4,3,1,0,0)$, and $|\Gamma(x,1)|+1=4$.
\end{rem}

\begin{prop}\label{P:containment}
Let $x=(a_1,....,a_n)$ and $y=(b_1,...,b_n)$ be two elements from $R_n$. Then $x \leq_c y$ if and only if $|\Gamma(x,a_k)| \leq |\Gamma(y,a_k)|$ for all $k=1,....,n.$
\end{prop}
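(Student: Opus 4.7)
The engine of both directions is Remark~\ref{R:cposition}: for any entry $a_k$ of $x$, the cardinality $|\Gamma(x,a_k)|+1$ is precisely the first position of $a_k$ in the non-increasing rearrangement $\widetilde{x}$. The containment condition $x \leq_c y$ asks that $\widetilde{x}_j \leq \widetilde{y}_j$ for every $j = 1, \ldots, n$, so my plan is to translate back and forth between ``counts above a threshold'' and ``positions in the sorted rearrangement'' via that remark.

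For the forward direction I fix $k$ and set $m = |\Gamma(x, a_k)|$. Remark~\ref{R:cposition} then shows that $\widetilde{x}_1, \ldots, \widetilde{x}_m$ are each strictly greater than $a_k$. Because $\widetilde{x}_i \leq \widetilde{y}_i$ for every $i$, the same holds for $\widetilde{y}_1, \ldots, \widetilde{y}_m$, and therefore $|\Gamma(y, a_k)| \geq m$, as required.

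For the reverse direction I fix $j \in \{1, \ldots, n\}$ and set $a = a_{\alpha_j} = \widetilde{x}_j$. Applying the hypothesis at $k = \alpha_j$ gives $|\Gamma(y, a)| \geq |\Gamma(x, a)|$. Using Remark~\ref{R:cposition}, the first occurrence of $a$ in $\widetilde{x}$ is at position $p = |\Gamma(x, a)| + 1 \leq j$, so $\widetilde{x}_p = \widetilde{x}_{p+1} = \cdots = \widetilde{x}_j = a$. Combined with the fact that the first $|\Gamma(y, a)| \geq p - 1$ entries of $\widetilde{y}$ strictly exceed $a$, I aim to conclude $\widetilde{y}_j \geq a$ by counting, within the first $j$ positions of $\widetilde{y}$, how many must land in $\{a, a+1, \ldots\}$.

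The step I expect to be the main obstacle is this last counting argument in the reverse direction. The hypothesis bounds the number of entries \emph{strictly above} a threshold, whereas the conclusion $\widetilde{y}_j \geq a$ is governed by the number of entries \emph{at or above} $a$. Bridging this gap requires invoking the hypothesis at every entry of $x$ whose value equals $a$ and pigeonholing the corresponding values in $y$, taking care with the multiplicity of $a$ on both sides.
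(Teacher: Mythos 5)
Your forward direction is complete and correct: if $\widetilde{x}_i\le\widetilde{y}_i$ for all $i$ and $m=|\Gamma(x,a_k)|$, then $\widetilde{x}_1,\dots,\widetilde{x}_m$ and hence $\widetilde{y}_1,\dots,\widetilde{y}_m$ all exceed $a_k$, so $|\Gamma(y,a_k)|\ge m$. The reverse direction, however, stalls exactly at the step you flagged, and the obstacle is not one you can pigeonhole away: it is fatal to the statement as printed. Take $n=2$, $x=(1,2)$, $y=(0,2)$. Then $|\Gamma(x,1)|=|\Gamma(y,1)|=1$ and $|\Gamma(x,2)|=|\Gamma(y,2)|=0$, so the right-hand condition holds for every $k$, yet $\widetilde{x}=(2,1)\not\leq_c(2,0)=\widetilde{y}$. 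Your proposed repair cannot work because the nonzero entries of an element of $R_n$ are pairwise distinct, so for $a>0$ there is only one index $k$ with $a_k=a$; invoking the hypothesis there tells you only that at least $|\Gamma(x,a)|=p-1$ entries of $y$ strictly exceed $a$, i.e.\ it controls $\widetilde{y}_1,\dots,\widetilde{y}_{p-1}$ and says nothing about $\widetilde{y}_p,\dots,\widetilde{y}_j$. What $\widetilde{y}_j\ge a$ actually requires is $|\{i:\ b_i\ge a\}|\ge j$, and $|\{i:\ b_i\ge a\}|$ equals $|\Gamma(y,a)|+1$ or $|\Gamma(y,a)|$ according as $a$ does or does not occur among the $b_i$; in the latter case the stated hypothesis falls one short, which is precisely what the counterexample exploits.

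The statement becomes true (and your counting argument closes immediately) if one replaces the strict count by the weak one: $x\leq_c y$ if and only if $|\{i:\ a_i\ge a_k\}|\le|\{i:\ b_i\ge a_k\}|$ for every $k$ with $a_k\ne 0$; equivalently, if and only if $|\Gamma(x,t)|\le|\Gamma(y,t)|$ for \emph{all} integers $t\ge 0$ rather than only for $t\in\{a_1,\dots,a_n\}$. The proposition as literally stated does hold under the extra hypothesis that every nonzero entry of $x$ also occurs in $y$ (for instance when $x$ and $y$ have the same multiset of entries, as in several of the later applications). For what it is worth, the paper's own one-line proof makes exactly the leap you balked at: it asserts, citing Remark~\ref{R:cposition}, that $b_{\alpha_{s+1}}\ge a_{\alpha_{s+1}}$ holds if and only if $|\Gamma(x,a_k)|\le|\Gamma(y,a_k)|$, without addressing the strict-versus-nonstrict discrepancy. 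Your instinct that this is the crux was exactly right; the honest resolution is to amend the statement, not to push harder on the pigeonhole.
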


\begin{proof}

Let $\widetilde{y}=  (b_{\alpha_1} \geq \cdots \geq b_{\alpha_n})$ and $\widetilde{x} = (a_{\alpha_1} \geq \cdots \geq a_{\alpha_n})$ be the reorderings of the entries of $y$ and of $x$ respectilvely. Then, by the Remark \ref{R:cposition}, $a_{\alpha_{s+1}}$ is the entry $a_k$ of $x$ for which $|\Gamma(x,a_k)|=s$. Therefore, $b_{\alpha_{s+1}} \geq a_{\alpha_{s+1}}$ if and only if the number of entries of $y$ which are larger than $a_k$ is more than the number of entries of $x$ which are larger than $a_k$. In other words,  $b_{\alpha_{s+1}} \geq a_{\alpha_{s+1}}$ if and only if $|\Gamma(x,a_k)| \leq |\Gamma(y,a_k)|$. Thus $x \leq_c y$ if and only if $|\Gamma(x,a_k)| \leq |\Gamma(y,a_k)|$, for all $k=1,....,n.$
\end{proof}

As a corollary of the Proposition \ref{P:containment}, we have

\begin{cor}\label{C:containment}
Let $x=(a_1,....,a_n),$ and $y=(b_1,...,b_n)$ be two elements of $R_n$. Then $y \geq_D x$ if and only if for all $1\leq k \leq n$ and for all $m \leq k$,  $|\Gamma(x(k),a_m)| \leq |\Gamma(y(k),a_m)|$.
\end{cor}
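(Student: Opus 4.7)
The plan is to derive Corollary \ref{C:containment} as an immediate consequence of Proposition \ref{P:containment} applied coordinate-wise to the truncations. First I would unfold Definition \ref{deodonrn.defn}: the statement $y \geq_D x$ means $\widetilde{x(k)} \leq_c \widetilde{y(k)}$ for every $k \in \{1,\dots,n\}$. Because the containment ordering $\leq_c$ is defined by comparing vectors only after sorting their entries in non-increasing order, it is invariant under rearrangement; so this condition is equivalent to the simpler requirement that $x(k) \leq_c y(k)$ for every $k$.

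Next, for each fixed $k$ I would apply Proposition \ref{P:containment} to the pair of length-$k$ integer tuples $x(k) = (a_1,\dots,a_k)$ and $y(k) = (b_1,\dots,b_k)$. The proposition and its proof use only the definition of $\Gamma$ and the rearrangement in non-increasing order, neither of which requires the underlying sequence to correspond to a rook matrix; hence the argument applies verbatim in $\Z^k$. The conclusion is that $x(k) \leq_c y(k)$ if and only if
\begin{equation*}
|\Gamma(x(k),a_m)| \leq |\Gamma(y(k),a_m)| \qquad \text{for all } m \in \{1,\dots,k\}.
\end{equation*}

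Stitching the two equivalences together, $y \geq_D x$ holds if and only if the displayed inequality holds for every $k \in \{1,\dots,n\}$ and every $m \leq k$, which is exactly the corollary. There is no real obstacle here; the one subtlety I would flag explicitly is that Proposition \ref{P:containment} is stated for elements of $R_n$, so I would remark that its proof in fact works for any two length-$k$ sequences of integers, which is what is being invoked when the proposition is applied to the truncations $x(k)$ and $y(k)$.
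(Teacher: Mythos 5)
Your proof is correct and is essentially the paper's own argument: the paper also derives the corollary by unwinding Definition \ref{deodonrn.defn} and applying Proposition \ref{P:containment} to each truncation $x(k)$, $y(k)$. Your added remark that the proposition's proof works verbatim for arbitrary length-$k$ integer tuples (not just elements of $R_n$) is a legitimate point the paper leaves implicit, but it does not change the route.
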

\begin{proof}
Immediate from Proposition \ref{P:containment}, and the definition of the Deodhar ordering.
\end{proof}

\begin{example}
Let $x=(a_1,a_2,a_3)=(1,0,3)$ and let $y=(b_1,b_2,b_3)=(3,0,2)$. Then
\begin{eqnarray*}
|\Gamma(x(1),a_1)|=0 &\leq& |\Gamma(y(1),a_1)|=1,\\
|\Gamma(x(2),a_1)|=0 &\leq& |\Gamma(y(2),a_1)|=1,\\
|\Gamma(x(2),a_2)|=1 &\leq& |\Gamma(y(2),a_2)|=2,\\
|\Gamma(x(3),a_1)|=1 &\leq& |\Gamma(y(3),a_1)|=2,\\
|\Gamma(x(3),a_2)|=2 &\leq& |\Gamma(y(3),a_2)|=2,\\
|\Gamma(x(3),a_3)|=0 &\leq& |\Gamma(y(3),a_3)|=0.\\
\end{eqnarray*}
Therefore, $x\leq_D y$.
\end{example}

\begin{rem}\label{R:deodhar0}
It follows from the definition of the Deodhar ordering that if $(a_1,....,a_n) \leq_D (b_1,....,b_n)$, then
$(a_1,...,a_k) \leq_D (b_1,...,b_k)$ for any $k\in \{1,....,n\}$.  Also, by repeated application of Proposition \ref{P:containment}, it follows that  $$(a_1,....,a_k,c_{k+1},...,c_m) \leq_D (b_1,....,b_k,c_{k+1},...,c_m)$$ for any set $\{c_{k+1},....,c_m\}$ of nonnegative integers.
\end{rem}


\section{\textbf{The Main Theorem.}}\label{S:final}

 We show in this section that the covering relation for the ordering $\leq_{PPR}$ on $R_n$ is  the same as the covering relation for the ordering $\leq_{D}$ on $R_n$. Our notation for these covering relations is
``$y \rightarrow_D x$,'' and ``$y \rightarrow_{PPR} x$,'' respectively.

\begin{lem}\label{L:preparation2}
Let $x=(a_1,....,a_n),\ y= (b_1,...,b_n)$ and $z=(c_1,...,c_n)$ be three elements from $R_n$ such that $a_k = b_k$ for all $k\in \{1,...,\widehat{i},...,n\}$ and $a_i < b_i$. Furthermore, suppose that    $c_k=a_k$ for $k=1,...,i$. If $x \leq_D z \leq_D y$ and $\ell(y) = \ell(x)+1$, then  $z=x$.
\end{lem}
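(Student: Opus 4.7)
The plan is to argue by contradiction. Suppose that $z \ne x$, and let $k$ be the smallest index at which $c_k \ne a_k$. Since $c_l = a_l$ for all $l \le i$, we must have $k > i$, and the truncations satisfy $z(k-1) = x(k-1)$. Hence $z(k)$ differs from $x(k)$ only at position $k$ (replacing $a_k$ by $c_k$), while $y(k)$ differs from $x(k)$ only at position $i$ (replacing $a_i$ by $b_i$).

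I would first translate the Deodhar inequalities at truncation level $k$ into constraints on $c_k$. For any integer threshold $m$ one computes
\begin{equation*}
|\Gamma(z(k),m)| - |\Gamma(x(k),m)| = [c_k>m] - [a_k>m],
\end{equation*}
\begin{equation*}
|\Gamma(y(k),m)| - |\Gamma(x(k),m)| = [b_i>m] - [a_i>m] = [a_i \le m < b_i].
\end{equation*}
By Proposition 4.5, the chain $\widetilde{x(k)} \le_c \widetilde{z(k)} \le_c \widetilde{y(k)}$ says $0 \le [c_k>m] - [a_k>m] \le [a_i\le m<b_i]$ for every $m$. The left inequality forces $c_k \ge a_k$, and since $c_k \ne a_k$ we conclude $c_k > a_k$; then $[c_k>m] - [a_k>m] = [a_k \le m < c_k]$, and the sandwich gives $[a_k, c_k) \subseteq [a_i, b_i)$. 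Thus
\begin{equation*}
a_i \le a_k < c_k \le b_i.
\end{equation*}

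Next I would use the length hypothesis $\ell(y) = \ell(x)+1$ to rule out such an index $k > i$. When $a_i \ne 0$ I would invoke Lemma 3.1. In case (1) of that lemma, $b_i = a_i+1$ degenerates the window $[a_i,b_i)$ to $\{a_i\}$, forcing $a_k = a_i$; but then $a_i$ and $a_k$ are equal nonzero entries at distinct positions of a rook matrix, a contradiction. In case (2), the set $\{a_i+1,\dots,a_i+s\}$ of entries of $x$ strictly between $a_i$ and $b_i$ sits entirely at positions $j_1<\dots<j_s<i$, so no index $k > i$ can have $a_k \in (a_i,b_i)$; combined with $a_k \ge a_i$, this again yields $a_k = a_i$ and the same contradiction.

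For the residual case $a_i = 0$, I would compute $\Delta\mathrm{coinv} := \mathrm{coinv}(y) - \mathrm{coinv}(x)$ directly; since all coinversion changes involve index $i$, this quantity equals $|\{l<i:\ 0 < a_l < b_i\}| + |\{l>i:\ a_l > b_i\}|$. The equation $\ell(y) - \ell(x) = (b_i + n - i) - \Delta\mathrm{coinv} = 1$, combined with the evident bounds that the first summand is at most $\min(i-1, b_i-1)$ and the second is at most $n-i$, forces the second summand to attain its maximum, i.e.\ every $l > i$ satisfies $a_l > b_i$. In particular $a_k > b_i$, contradicting $a_k < c_k \le b_i$. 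I expect the main obstacle to be precisely this edge case, since Lemma 3.1 implicitly assumes $a_i \ne 0$, so it must be supplemented by this direct coinversion count; the rest of the argument is routine manipulation of $|\Gamma|$-values.
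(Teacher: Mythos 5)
Your proof is correct, and it follows the same skeleton as the paper's argument: take the first index $k>i$ at which $z$ deviates from $x$, translate the sandwich $x(k)\leq_c z(k)\leq_c y(k)$ into counting inequalities via the $\Gamma$-statistic of Proposition \ref{P:containment} (only the easy direction is needed, namely that $u\leq_c v$ forces $|\Gamma(u,m)|\leq|\Gamma(v,m)|$ for every threshold $m\geq 0$), and then use the covering characterization of Lemma \ref{L:PPRcovering0} to reach a contradiction. The differences are in execution, and they favor your version. Where the paper derives only $a_k<c_k$ and then runs a multi-way case analysis on the location of $c_k$ relative to $a_i$ and $b_i$, each case settled by a separate $\Gamma$-count, your indicator-function computation extracts the two-sided constraint $a_i\leq a_k<c_k\leq b_i$ in one stroke, after which the contradiction concerns only where the value $a_k\in[a_i,b_i)$ can sit: by Lemma \ref{L:PPRcovering0} every value strictly between $a_i$ and $b_i$ occupies a position before $i$, and $a_k=a_i\neq 0$ is barred by the rook condition. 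More importantly, you are right that Lemma \ref{L:PPRcovering0} is proved under the tacit assumption $a_i\neq 0$ (its length computation uses $a_i^*=a_i+n-i$, which fails for $a_i=0$), and the paper's proof of the present lemma inherits that omission; your direct count $\mathrm{coinv}(y)-\mathrm{coinv}(x)=|\{l<i:\ 0<a_l<b_i\}|+|\{l>i:\ a_l>b_i\}|$, together with the bounds $b_i-1$ and $n-i$ on the two summands and the identity $\ell(y)-\ell(x)=(b_i+n-i)-(\mathrm{coinv}(y)-\mathrm{coinv}(x))$, correctly forces $a_l>b_i$ for all $l>i$ and so disposes of the case $a_i=0$. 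That supplement is genuinely needed and does not appear in the paper.
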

\begin{proof}
Assume otherwise that $z \neq x$.  Let $j > i$ be the smallest number such that $c_k=a_k$ for $k<j$ but $c_j  \neq a_j$.  Since $ x \leq_D z$, it cannot be true that $c_j < a_j$.
So, we have that $a_j < c_j$. This, in particular, implies that $c_j$ is nonzero.

 We now compare $c_j$ with $a_i$. Observe that $c_j=a_i$ is not possible. Thus, there are two cases; either $c_j < a_i$ or $a_i < c_j$.

 We proceed with the first case. Then, we have $a_j=b_j  < c_j < a_i = c_i <  b_i$.  Recall that $\Gamma(z(j),b_j) = \{ c_k|\ b_j < c_k,\  k=1,...,j\}$, and  that $\Gamma(y(j),b_j) = \{ b_k|\ b_j < b_k,\ k=1,...,j\}$.

Since,
\begin{equation*}
\{ b_1,..., b_j\} \setminus \{b_i,b_j\} = \{ c_1,...,c_j \} \setminus \{c_j,c_i\}.
\end{equation*}
and since $b_j < c_j<  c_i$, we see that  $|\Gamma(z(j),b_j) | = |\Gamma(y(j),b_j)| +1$. By the Remark \ref{R:cposition}, this is equal to the position of $b_j$ in $\widetilde{y(j)}$. In other words, the position of $b_j$ in $\widetilde{y(j)}$ is $\alpha_{s}=|\Gamma(z(j),b_j) |$.

On the other hand, $|\Gamma(z(j),b_j) |$ is equal to the number of entries of $z(j)$ which are larger than $b_j$. Therefore, in $c_{\alpha_s}>b_{\alpha_s}=b_j$,  But this is a contradiction to $z(j) \leq_c y(j)$.
Therefore, the first case, $c_j<a_i$ is not possible.

We assume that $a_i < c_j$. Since $a_j=b_j$, and since by our initial assumption $a_j < c_j$, we have that $b_j< c_j$. Since $i<j$, and since $\ell(y) = \ell(x) +1$, Lemma \ref{L:PPRcovering0} implies  that $b_i \leq c_j$.

Assume for a second that $b_i < c_j$.  Let $\alpha_s$ be the position of $c_j$ in $\widetilde{z(j)}$.
Since,
\begin{equation*}
\{ b_1,..., b_j\} \setminus \{b_i,b_j\} = \{ c_1,...,c_j \} \setminus \{c_j,c_i\},
\end{equation*}
and since, $c_i<c_j$, $b_i< c_j$, and $b_j < c_j$,
we see that  $|\Gamma(z(j),c_j) | = |\Gamma(y(j),c_j)|$. Therefore, $b_{\alpha_s}<c_{\alpha_s}=c_i$. But this contradicts the fact that $z(j) \leq_c y(j)$.

Therefore, we assume that $b_i=c_j$.  Since $b_j=a_j<c_j=b_i$, and since $\ell(y) = \ell(x) +1$, Lemma \ref{L:PPRcovering0} implies  that $b_j \leq c_i=a_i < c_j$. We look at the position $\alpha_s$ of $c_i$ in $\widetilde{z(j)}$. Since,
\begin{equation*}
\{ b_1,..., b_j\} \setminus \{b_i,b_j\} = \{ c_1,...,c_j \} \setminus \{c_j,c_i\},
\end{equation*}
we see that  $|\Gamma(z(j),c_i) | = |\Gamma(y(j),c_i)|$. Therefore, $b_{\alpha_s}<c_{\alpha_s}=c_i$. This contradicts the fact that $z(j) \leq_c y(j)$. We have handled all the cases, and the proof is complete.
\end{proof}

\begin{lem}\label{L:preparation3}
Let $x=(a_1,....,a_n),\ y= (b_1,...,b_n)$ and $z=(c_1,...,c_n)$ be three elements from $R_n$ such that $a_k = b_k$ for all $k\in \{1,...,\widehat{i},...,n\}$ and $a_i < b_i$. Furthermore, $c_k=b_k$ for $k=1,...,i$. If $x \leq_D z \leq_D y$ and $\ell(y) = \ell(x)+1$, then  $z=y$.
\end{lem}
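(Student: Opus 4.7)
The plan is to mirror the proof of Lemma~\ref{L:preparation2}, with the roles of $x$ and $y$ interchanged. Suppose for contradiction that $z\ne y$, and let $j>i$ be the smallest index with $c_j\ne b_j=a_j$; such a $j$ lies past $i$ because $c_k=b_k$ for $k\le i$ by hypothesis. The first observation is that $c_j<a_j$: since $z(j)$ and $y(j)$ agree in positions $1,\dots,j-1$, the reordering $\widetilde{z(j)}$ is obtained from $\widetilde{y(j)}$ by swapping $a_j$ for $c_j$, so the componentwise inequality $\widetilde{z(j)}\le_c\widetilde{y(j)}$ forces $c_j<a_j$. In particular $a_j>0$.

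Next I would invoke Lemma~\ref{L:PPRcovering0}, applied to the covering hypothesis $\ell(y)=\ell(x)+1$, to see that the values $a_i+1,\dots,b_i-1$ occur in $x$ only at positions strictly less than $i$. This rigidity implies $a_j\notin(a_i,b_i)$ (since $j>i$ and the nonzero entries of $x$ are distinct) and also $c_j\notin(a_i,b_i)$ (since those values are already placed at positions $<i$ in $z$ via $c_k=b_k=a_k$, so another occurrence at position $j$ would violate distinctness of nonzero entries in $z\in R_n$). Combining this with $c_j\ne b_i$ and $a_j\ne b_i$ (forced respectively by $c_i=b_i$ and by distinctness of nonzero entries of $y$), the possibilities for $c_j$ reduce to three mutually exclusive cases: (A) $c_j>b_i$, (B) $c_j=a_i$, or (C) $c_j<a_i$; and moreover in cases (A) and (B) one has $a_j>b_i$.

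In each case I will exhibit a value $v$ such that $N_x(v):=|\{k\le j:a_k>v\}|$ strictly exceeds $N_z(v):=|\{k\le j:c_k>v\}|$; this contradicts $x(j)\le_c z(j)$, hence $x\le_D z$, by the sorted definition of $\le_c$. Take $v=c_j$ in cases (A) and (C), and $v=b_i$ in case (B). Writing $U=\{a_k:k<j,\,k\ne i\}$ for the multiset common to $x(j)$ and $z(j)$, a direct count yields $N_x(v)-N_z(v)=1$ in each case: in (A), among the ``special'' entries only $a_j$ exceeds $v=c_j$, while neither $b_i$ nor $c_j$ does; in (B), only $a_j>v=b_i$ (since $c_j=a_i<b_i$); in (C), both $a_i$ and $a_j$ exceed $v=c_j$, whereas only $b_i$ does so among the special entries of $z(j)$.

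The main obstacle is the rigidity step excluding $a_j,c_j\in(a_i,b_i)$; this is the only place where the covering hypothesis $\ell(y)=\ell(x)+1$ enters, and it is precisely Lemma~\ref{L:PPRcovering0} that provides it. Once this rigidity is in place, the three-case analysis reduces to the elementary counting above, and we conclude $z=y$.
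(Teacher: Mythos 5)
Your proof is correct and follows essentially the same route as the paper's: locate the first index $j>i$ where $z$ and $y$ differ, rule out $c_j>a_j=b_j$ using $z\leq_D y$, invoke Lemma~\ref{L:PPRcovering0} to pin down where the values in the open interval $(a_i,b_i)$ must sit, and contradict $x\leq_D z$ by counting entries above a suitable threshold. The one notable difference is that your case (B), $c_j=a_i$, is a boundary case that the paper's own dichotomy (``either $c_j<a_i$ or $a_i<c_j$'') silently omits, so your version is in fact slightly more complete.
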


\begin{proof}
We proceed as in the proof of Lemma \ref{L:preparation2}. Assume otherwise that $z \neq y$, and let $j > i$ be the first position where $z$ differs from $y$. Hence, there are now two subcases; either $c_j < b_j$ or else $b_j < c_j$.

In the second case, with $b_j< c_j$, we see that $y(j) <_c z(j)$, which contradicts the fact that $ z \leq_D y$.

Therefore, we assume that $c_j < b_j = a_j$. There are now two subcases; either $c_j < a_i$, or else $a_i < c_j$. We first treat the case $c_j<a_i$.

Recall that $\Gamma(z(j),c_j) = \{ c_k|\ c_j < c_k,\  k=1,...,j\}$, and that $\Gamma(x(j),c_j) = \{ a_k|\ c_j < a_k,\ k=1,...,j\}$. Then, since
\begin{equation*}
\{ a_1,..., a_j\} \setminus \{a_i,a_j\} = \{ c_1,...,c_j \} \setminus \{c_j,c_i\},
\end{equation*}
and $c_j < a_i,\  a_j$, we see that  $|\Gamma(z(j),c_j) | +1 = |\Gamma(x(j),c_j)|$.
This shows the following; if the position of $c_j$ in $\widetilde{z(j)}$ is $\alpha_{s}$, then $a_{\alpha_{s}} > c_{\alpha_{s}} = c_j$, a contradiction to $x(j) \leq_c z(j)$.

We proceed with the case that $a_i < c_j$. Since $\ell(y) = \ell(x) +1$, and $z(j-1) = y(j-1)$, we see that $c_j$ must be larger than $c_i=b_i = a_i +s +1$ (or larger than $c_i=b_i =a_i +1$). Therefore, similar to the above, since
\begin{equation*}
\{ a_1,..., a_n\} \setminus \{a_i,a_j\} = \{ c_1,...,c_n \} \setminus \{c_j,c_i\},
\end{equation*}
and $a_i < c_j <  a_j$, and $c_i < c_j$, we see that  $|\Gamma(z(j),c_j) | +1 = |\Gamma(x(j),c_j)|$. This shows the following; if the position of $c_j$ in $\widetilde{z(j)}$ is $\alpha_{s}$, then $a_{\alpha_{s}} > c_{\alpha_{s}} = c_j$, a contradiction to $x(j) \leq_c z(j)$.

Therefore, we conclude that $z = y$.
\end{proof}

\begin{lem}\label{L:D1}
Let $x=(a_1,....,a_n)$ and $z=(c_1,...,c_n)$ be two elements from $R_n$. Suppose that
$c_i=a_r$ and $c_r=a_i$, with $i<r$. Furthermore, suppose that
$c_k=a_k$, for $k\notin \{i,r\}$.
If $a_r > a_i$, then $z \gneq_D x$.
\end{lem}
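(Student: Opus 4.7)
The plan is to apply Corollary \ref{C:containment} and reduce $z \geq_D x$ to verifying
\[
|\Gamma(x(k),a_m)| \leq |\Gamma(z(k),a_m)|
\]
for every $1\leq m \leq k \leq n$. I would split the argument into three cases according to the size of $k$ relative to the swapped positions $i$ and $r$.

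If $k<i$, then $x(k)=z(k)$ as tuples, so both $\Gamma$-counts coincide. If $i\leq k < r$, then $x(k)$ and $z(k)$ agree outside coordinate $i$, where $z(k)$ has the strictly larger entry $a_r$ in place of $a_i$. Since replacing one entry of a finite sequence by a larger one can only increase the number of entries strictly exceeding any fixed value, we obtain $|\Gamma(z(k),a_m)| \geq |\Gamma(x(k),a_m)|$ for every $m\leq k$. Finally, if $k\geq r$, the tuples $x(k)$ and $z(k)$ differ only by the swap at positions $i$ and $r$, so they carry the same multiset of entries and the two $\Gamma$-counts agree for every threshold.

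This already gives $z \geq_D x$. For the strict inequality, I would note that $a_r > a_i$ forces $z \neq x$ (the $i$th coordinates differ), and that $\leq_D$ is antisymmetric: the equalities $\widetilde{u(k)}=\widetilde{v(k)}$ for all $k$ determine the underlying sequence uniquely, since the multiset of the first $k$ entries, together with the multiset of the first $k-1$, pins down the $k$th entry by multiset difference. Combining antisymmetry with $z\geq_D x$ and $z\neq x$ yields $z \gneq_D x$.

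The argument is essentially bookkeeping, with no serious obstacle. The only step that deserves explicit mention is the elementary monotonicity fact used in the middle case, namely that raising a single entry of a finite sequence produces a sorted profile dominating the original componentwise; this follows directly from the definition of $\leq_c$ by comparing the two reorderings position by position.
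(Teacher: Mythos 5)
Your proof is correct and follows the same route as the paper, which simply asserts that the lemma ``follows directly from Corollary \ref{C:containment}''; your three-case bookkeeping (before position $i$, between $i$ and $r$ where the single-entry monotonicity fact applies, and from $r$ onward where the multisets agree) is exactly the verification being left implicit there. The antisymmetry argument for strictness is also fine, though one could get it even more directly from $\widetilde{z(i)} \neq \widetilde{x(i)}$.
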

\begin{proof}
This follows directly from Corollary \ref{C:containment}.
\end{proof}

\begin{prop}\label{P:Deodharcovering0}
Let $x=(a_1,....,a_n)$ and $y= (b_1,...,b_n)$ be two two elements from $R_n$ such that
$a_k = b_k$ for all $k\in \{1,...,\widehat{i},...,n\}$ and $a_i < b_i$. Then $\ell(y) = \ell(x) +1$ if and only if $y \rightarrow_D x$.

\end{prop}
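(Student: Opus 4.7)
The plan is to prove both directions of the biconditional using the structural results already established. A preliminary observation is that $x \leq_D y$ holds under our hypotheses: the truncations $x(k)$ and $y(k)$ coincide for $k < i$, and for $k \geq i$ the multiset of $y(k)$ is obtained from that of $x(k)$ by replacing the single entry $a_i$ with the strictly larger $b_i$, so Proposition \ref{P:containment} yields $\widetilde{x(k)} \leq_c \widetilde{y(k)}$.

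For the forward implication, I assume $\ell(y) = \ell(x)+1$ and let $z = (c_1,\ldots,c_n)$ satisfy $x \leq_D z \leq_D y$; the goal is to force $z \in \{x, y\}$. Since $\widetilde{x(k)} = \widetilde{y(k)}$ for $k < i$ and $\leq_c$ is a partial order, the sandwich gives $\widetilde{z(k)} = \widetilde{x(k)}$ for $k < i$, from which a straightforward induction on $k$ yields $c_k = a_k$ for all $k < i$. Comparing the $i$-th truncated multisets then pins down $a_i \leq c_i \leq b_i$. Lemma \ref{L:PPRcovering0} now splits into two subcases: subcase (1) has no integer strictly between $a_i$ and $b_i$, while subcase (2) has every such integer already used among $c_1,\ldots,c_{i-1}$, forbidding its reuse at position $i$ since elements of $R_n$ have distinct nonzero entries. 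Either way $c_i \in \{a_i, b_i\}$, so Lemma \ref{L:preparation2} or Lemma \ref{L:preparation3} concludes $z = x$ or $z = y$, respectively.

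For the converse, I argue the contrapositive: if $\ell(y) \geq \ell(x)+2$, I produce a strict intermediate $z$. Lemma \ref{L:PPRcovering0} guarantees an integer $c$ with $a_i < c < b_i$ and $c \notin \{a_1,\ldots,a_{i-1}\}$. If $c$ also avoids $\{a_{i+1},\ldots,a_n\}$, I set $z$ equal to $x$ with $a_i$ replaced by $c$; the same computation that gave $x \leq_D y$ now gives $x <_D z <_D y$. Otherwise $c = a_m$ for some $m > i$, and I let $z$ be obtained from $x$ by transposing positions $i$ and $m$. Lemma \ref{L:D1} supplies $x <_D z$, and $z <_D y$ follows after partitioning truncations into the ranges $k < i$, $i \leq k < m$, and $k \geq m$, where the comparison collapses to a single pairwise inequality per range ($c < b_i$ in the middle range and $a_i < b_i$ at the end).

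The principal obstacle is the swap case of the converse: because $z$ differs from $x$ at two positions, the strict upper bound $z <_D y$ must be tracked across three ranges of truncations, and one must confirm strictness occurs somewhere (for instance at $k = m$, where the multisets of $z(m)$ and $y(m)$ differ precisely by $a_i$ versus $b_i$). Once the partition is in place, however, each range reduces to a single multiset comparison via Proposition \ref{P:containment}, so careful bookkeeping rather than any deeper idea is what stands in the way.
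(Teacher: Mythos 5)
Your proposal is correct and follows essentially the same route as the paper: the forward direction pins down $c_k=a_k$ for $k<i$, forces $c_i\in\{a_i,b_i\}$ via the two cases of Lemma \ref{L:PPRcovering0}, and invokes Lemmas \ref{L:preparation2} and \ref{L:preparation3}; the converse produces the same two intermediate elements $z$ (replace $a_i$ by a missing value, or swap with a later occurrence) that the paper constructs, merely packaged as a contrapositive rather than a case-by-case contradiction for each $d$. No substantive difference.
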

\begin{proof}
It is clear from the hypotheses that $x<_{PPR} y$, and that $x <_D y$.  We first show that  if $\ell(y) = \ell(x)+1$, then $y \rightarrow_D x$.  Let $z=(c_1,...,c_n) \in R_n$ be such that $x \leq_D z \leq_D y$. Then, since $a_k = b_k$ for $k=1,...,i-1$, we must have $c_k=a_k$, for $k=1,...,i-1$.  In other words, $x(k)=z(k)=y(k)$ for $k=1,...,i-1$. Since $x(i) \leq_c z(i) \leq_c y(i)$, we must also have $a_i \leq c_i \leq b_i$. Therefore, either $a_i=c_i$, or $a_i< c_i$. In the former case, by the Lemma \ref{L:preparation2}, $z$ is identically equal to $x$. Therefore, we have $a_i < c_i \leq b_i$, so that $x<_D z \leq_D y$. We are going to show that  $z=y$.

As in the notation of Lemma \ref{L:PPRcovering0}, if $b_i = a_i+s+1$ for some $s\geq 0$, then we must have $c_i=b_i$. This is because, $c_i$ cannot be strictly larger than $b_i$ (otherwise $z(i) > y(i)$ ), and $c_i$ cannot less than $b_i$ (otherwise $c_i$ has to be one of $\{a_{j_1},...,a_{j_s}\}$, which contradicts with the fact that $z(k)=y(k)$ for all $k=1,....,i-1$). Therefore, $c_k = b_k$ for $k=1,...,i$. By the Lemma \ref{L:preparation3}, we see that $z=y$. Therefore, $\ell(y) = \ell(x) +1$ implies that $y \rightarrow_D x$.

Conversely, assume that $y \rightarrow_D x$. If $b_i=a_i+1$, then it is clear that $\ell(y)=\ell(x)+1$. So, we assume that $b_i=a_i +s+1$, for some $s>0$. To finish the proof, by the Lemma \ref{L:PPRcovering0}, it is enough to show that there exists a sequence of indices $1 \leq j_1< \cdots < j_s < i$ such that $\{a_{j_1},...,a_{j_s}\} = \{a_i+1,...,a_i+s\}$, and $b_i=a_i+s+1$.

 Let $d$ be a number such that $1\leq d \leq s$. If $a_i+d$ does not appear in $y$, then we define $z=(c_1,...,c_n) \in R_n$ to be the sequence such that $c_k = a_k$ for $k\in \{1,....,\widehat{i},...,n\}$ and $c_i = a_i+d$. It is clear that $x \lneq_D z \lneq_D y$. But this contradicts with the hypotheses that $y \rightarrow_D x$. Therefore, the number $a_i+d$ is an entry of $y$.  Assume for a second that $a_i+d=b_t=a_t$ for some $t > i$. Then we define $z=(c_1,...,c_n) \in R_n$ to be the element such that $c_k = a_k$ for $k\in \{1,....,\widehat{i},...,\widehat{t},...,n\}$ and $c_i = a_i+d,\  c_t=a_i$. Then, using the Lemma \ref{L:D1}, it is easy to check that $x \lneq_D z \lneq_D y$, which is a contradiction. Therefore, $t<i$. In other words, for any $1\leq d < s$, the number $a_i+d$ is an entry of $x$, with the index $<i$. This shows that there exists a sequence of indices $1 \leq j_1< \cdots < j_s < i$ such that the set $\{a_{j_1},...,a_{j_s}\}$ is equal to $\{a_i+1,...,a_i+s\}$, and $b_i=a_i+s+1$.
 \end{proof}

\begin{lem}\label{L:preparation4}
Let $x=(a_1,...,a_n),\ y = (b_1,...,b_n)$ and $z=(c_1,..,c_n)$ be three element of $R_n$, such that
$\widetilde{x} = \widetilde{y}$.  If $x \leq_D z \leq_D y$, then $\widetilde{z}= \widetilde{x} = \widetilde{y}$.
\end{lem}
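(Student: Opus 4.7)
The plan is to observe that this lemma is essentially an immediate consequence of the definitions, obtained simply by specializing the Deodhar condition at the largest index $k=n$.

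First I would unravel the Deodhar ordering at the level $k=n$: by definition $x \leq_D z$ requires in particular that $\widetilde{x(n)} \leq_c \widetilde{z(n)}$, i.e.\ $\widetilde{x} \leq_c \widetilde{z}$. Similarly, $z \leq_D y$ forces $\widetilde{z} \leq_c \widetilde{y}$.

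Next I would feed this into the definition of the containment order. Writing $\widetilde{x} = (a_{\alpha_1}, \ldots, a_{\alpha_n})$, $\widetilde{y} = (b_{\alpha_1}, \ldots, b_{\alpha_n})$, $\widetilde{z} = (c_{\alpha_1}, \ldots, c_{\alpha_n})$ for the non-increasing rearrangements, the two inequalities $\widetilde{x} \leq_c \widetilde{z} \leq_c \widetilde{y}$ translate into the componentwise chain
\begin{equation*}
a_{\alpha_j} \;\leq\; c_{\alpha_j} \;\leq\; b_{\alpha_j} \qquad \text{for every } j = 1, \ldots, n.
\end{equation*}

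Finally I would invoke the hypothesis $\widetilde{x} = \widetilde{y}$, which says $a_{\alpha_j} = b_{\alpha_j}$ for all $j$. Combined with the displayed sandwich, this forces $c_{\alpha_j} = a_{\alpha_j} = b_{\alpha_j}$ for every $j$, i.e.\ $\widetilde{z} = \widetilde{x} = \widetilde{y}$, which is the conclusion. There is no real obstacle here: the statement collapses to the trivial fact that a componentwise chain between two equal vectors must be constant, and the only subtlety is remembering that $\leq_D$ applied at $k=n$ is exactly $\leq_c$ on the sorted tuples.
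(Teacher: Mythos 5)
Your proof is correct and follows essentially the same route as the paper's: both specialize the Deodhar condition at $k=n$ and sandwich the sorted entries of $z$ between those of $x$ and $y$ (the paper merely phrases the final step as a contradiction rather than a direct squeeze). The only cosmetic caveat is that the three tuples are sorted by different permutations, so the shared subscript $\alpha_j$ should be read as ``the $j$-th largest entry'' of each, which is the same notational convention the paper itself uses for $\leq_c$.
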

\begin{proof}
By definition of the Deodhar ordering,  $x \leq_D z \leq_D y$ is true if and only if $x(k) \leq_c z(k) \leq_c y(k)$, for all $k=1,....,n$. Recall that $\widetilde{z}$ stands for the reordering, from the largest to smallest entries of $z$.  Therefore, if $\widetilde{z} \neq \widetilde{x}$, then  there exits $1 \leq \alpha_r \leq n$ such that
$a_{\alpha_r} < c_{\alpha_r}$. But since $z(n) \leq_c y(n)$, we see that $c_{\alpha_r} \leq b_{\alpha_r} = a_{\alpha_r}$, a contradiction. Therefore $\widetilde{z}= \widetilde{x}$.
\end{proof}

\begin{lem}\label{L:preparation5}
Let $x=(a_1,....,a_n),\ y= (b_1,...,b_n)$ and $z=(c_1,...,c_n)$ be three elements from $R_n$ such that $\widetilde{x(n-1)}=\widetilde{y(n-1)}=\widetilde{z(n-1)}$, $a_n = b_n$ and $x \leq_D z \leq_D y$. Then, $c_n=a_n = b_n$.
\end{lem}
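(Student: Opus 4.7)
The plan is to reduce the statement to an immediate application of Lemma \ref{L:preparation4}, combined with a simple multiset cancellation argument.

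First, I would observe that the hypotheses $\widetilde{x(n-1)}=\widetilde{y(n-1)}$ and $a_n=b_n$ together imply $\widetilde{x}=\widetilde{y}$. Indeed, the multiset of the entries of $x$ is obtained from the multiset of entries of $x(n-1)$ by adjoining $a_n$, and similarly for $y$; since these two building blocks match, the resulting sorted sequences coincide. In particular, $x$ and $y$ now satisfy the hypothesis of Lemma \ref{L:preparation4}.

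Next, since $x \leq_D z \leq_D y$ is given, Lemma \ref{L:preparation4} applies directly and yields $\widetilde{z}=\widetilde{x}=\widetilde{y}$. Thus the multiset $\{c_1,\ldots,c_n\}$ equals the multiset $\{a_1,\ldots,a_n\}$.

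Finally, the assumption $\widetilde{z(n-1)}=\widetilde{x(n-1)}$ tells us that the sub-multisets $\{c_1,\ldots,c_{n-1}\}$ and $\{a_1,\ldots,a_{n-1}\}$ coincide. Removing these equal sub-multisets from the equality $\{c_1,\ldots,c_n\}=\{a_1,\ldots,a_n\}$ forces $c_n=a_n$, which together with the hypothesis $a_n=b_n$ gives $c_n=a_n=b_n$, as desired. There is no real obstacle here; the lemma is essentially a packaging of Lemma \ref{L:preparation4} together with the observation that equal multisets with equal sub-multisets of size $n-1$ must share the same remaining entry.
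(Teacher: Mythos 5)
Your proposal is correct and follows essentially the same route as the paper: deduce $\widetilde{x}=\widetilde{y}$ from the hypotheses, invoke Lemma \ref{L:preparation4} to get $\widetilde{z}=\widetilde{x}=\widetilde{y}$, and then cancel the equal $(n-1)$-entry sub-multisets to isolate $c_n=a_n=b_n$. The only difference is that you spell out the multiset cancellation step, which the paper leaves implicit.
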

\begin{proof}
Since $\widetilde{x(n-1)}=\widetilde{y(n-1)}$, and since $a_n = b_n$, we see, by the Lemma \ref{L:preparation4}, that $\widetilde{z}=\widetilde{x}=\widetilde{y}$. This, together with the fact that $\widetilde{z(n-1)}=\widetilde{x(n-1)}=\widetilde{y(n-1)}$, forces the equality $c_n=a_n=b_n$.
\end{proof}

\begin{prop} \label{P:Deodharcovering1}
Let $x=(a_1,...,a_n)$ and $y = (b_1,...,b_n)$ be two elements of $R_n$. Suppose that for some $1 \leq i < j \leq n$, $a_j=b_i,\ a_i = b_j$ and $b_j < b_i$, and $a_k=b_k$ for all $k\in \{1,...\widehat{i},...,\widehat{j},...,n\}$. Then, $\ell(y) = \ell(x)+ 1$ if and only if $y \rightarrow_D x$.
\end{prop}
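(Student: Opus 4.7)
The plan is to translate the covering condition $\ell(y)=\ell(x)+1$, which by Lemma~\ref{L:PPRcovering1} amounts to the absence of any $s\in(i,j)$ with $a_i<a_s<a_j$, into a statement about $\leq_D$-covers. Since $a_j=b_i>0$ and the nonzero entries of a rook matrix are distinct, the possibility $a_s=a_j$ is automatically excluded; so the failure of the Lemma condition is equivalent to the existence of $s\in(i,j)$ with $a_i\leq a_s<a_j$, and this observation will let me build an intermediate for the converse direction in a uniform way.

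For the direction $y\rightarrow_D x\Longrightarrow\ell(y)=\ell(x)+1$, I would argue the contrapositive. Assuming such an $s$ exists with $a_i\leq a_s<a_j$, let $z$ be $x$ with the entries at positions $s$ and $j$ swapped: $c_s=a_j$, $c_j=a_s$, and $c_k=a_k$ otherwise. Lemma~\ref{L:D1} (with $s<j$ and $a_j>a_s$) immediately yields $x<_D z$. To check $z<_D y$ I will compare truncations: for $k<i$ and for $k\geq j$ the multisets of $z(k)$ and $y(k)$ already coincide; for $i\leq k<s$, the multiset of $z(k)$ equals that of $x(k)$ while the multiset of $y(k)$ replaces $a_i$ by $a_j>a_i$, giving $\widetilde{z(k)}\leq_c\widetilde{y(k)}$; and for $s\leq k<j$, both multisets gain $a_j$, but $z(k)$ loses $a_s$ while $y(k)$ loses $a_i$, so the two differ by $a_i$ (in $z$) versus $a_s$ (in $y$), and $a_i\leq a_s$ again yields the inequality. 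Strictness at $k=i$ is immediate, so $x<_D z<_D y$, contradicting $y\rightarrow_D x$.

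For the forward direction, I will fix $z=(c_1,\ldots,c_n)$ with $x\leq_D z\leq_D y$ and show $z\in\{x,y\}$. Because $\widetilde{x}=\widetilde{y}$, Lemma~\ref{L:preparation4} forces $\widetilde{z}=\widetilde{x}$; and because $\widetilde{x(k)}=\widetilde{y(k)}$ for $k<i$ and for $k\geq j$, an elementary induction pins down $c_k=a_k$ for $k<i$ and for $k>j$, and leaves the multiset identity $\{c_i,\ldots,c_j\}=\{a_i,\ldots,a_j\}$. The decisive step is to determine $c_j$. Using the sandwich $\widetilde{x(j-1)}\leq_c\widetilde{z(j-1)}\leq_c\widetilde{y(j-1)}$: if $c_j=a_s$ with $i<s<j$ and $a_s<a_i$, then $\widetilde{z(j-1)}$ retains $a_i$ at a slot where $\widetilde{y(j-1)}$ holds only an element of $L=\{a_t:i<t<j,\,a_t<a_i\}$, breaking $\widetilde{z(j-1)}\leq_c\widetilde{y(j-1)}$; dually, if $a_s>a_j$ then $\widetilde{z(j-1)}$ omits an element from $H=\{a_t:i<t<j,\,a_t>a_j\}$ that is present in $\widetilde{x(j-1)}$, breaking $\widetilde{x(j-1)}\leq_c\widetilde{z(j-1)}$. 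Hence $c_j\in\{a_i,a_j\}$.

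Finally, I would analyze the two cases by induction on $k$ from $i$ up to $j-1$. For $c_j=a_j$, at each step $c_k\in\{a_k,\ldots,a_{j-1}\}$ by the multiset constraint, and after cancelling the common entries the sandwich $\widetilde{x(k)}\leq_c\widetilde{z(k)}\leq_c\widetilde{y(k)}$ reduces to the two conditions $c_k\geq a_k$ and $\widetilde{\{a_i,c_k\}}\leq_c\widetilde{\{a_j,a_k\}}$. Combining these with the Lemma dichotomy (any candidate $c_k=a_l$ with $l\in(i,j)$ satisfies $a_l<a_i$ or $a_l>a_j$) eliminates every option but $c_k=a_k$, so $z=x$; the case $c_j=a_i$ is handled symmetrically to give $z=y$. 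I expect this inductive case analysis to be the main technical obstacle: while each sub-case is routine once the dichotomy is exploited, careful bookkeeping of the positions in the sorted sub-multisets, and of the degenerate situations in which several zero entries coincide, will be essential.
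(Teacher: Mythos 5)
Your argument is correct and follows essentially the same route as the paper: the same reduction via Lemma~\ref{L:PPRcovering1}, the same intermediate element $z$ (swap positions $s$ and $j$) for the cover-implies-length-one direction, and the same sandwich analysis of truncations under $\leq_c$ (cancelling common multiset entries, as in Proposition~\ref{P:containment}) to force $z\in\{x,y\}$ in the other direction. The only differences are organizational --- you pin down $c_j$ first and then induct upward on $k=i,\dots,j-1$ within the two branches, whereas the paper determines $c_{i+1},\dots,c_{j-1}$ first and resolves positions $i,j$ at the end --- and you explicitly treat the degenerate case $a_s=a_i=0$, which the paper's write-up of that direction elides.
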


\begin{proof}
It is clear from Lemma \ref{L:D1} that $x <_D y$. Also, we know from Lemma \ref{L:PPRcovering1} that  $\ell(y) = \ell(x)+ 1$ if and only if for each $s\in \{i+1,...,j-1\}$, either $a_j< a_s$, or $a_s < a_i$.  Throughout the
proof, we shall make use of this.

Suppose first that $y \rightarrow_D x$. Assume that there exists $s \in \{i+1,...,j-1\}$ such that $a_i < a_s < a_j$. Then, define $z =(c_1,...,c_n) \in R_n$ such that $c_k=a_k$ for all $k\notin \{s,j\}$, and,  $c_s = a_j$, $c_j= a_s$. Then, by the repeated applications of Lemma \ref{L:D1}, it is easy to see that $x \lneq_D z \lneq_D y$. But this implies that $y$ does not cover $x$ in the Deodhar ordering, which is a contradiction.  Therefore, $\ell(y) = \ell(x)+1$.

Conversely, suppose that $\ell(y)=\ell(x)+1$. There are two cases; $j=i+1$, or $j> i+1$.
Suppose first that $j=i+1$. Notice that by the Lemma \ref{L:preparation4}, the set of the entries of $z$ is equal to the set of entries of $x$, which is also equal to the set of entries of $y$. Clearly, for $k=1,....,i-1$, we have that $x(k)=z(k)=y(k)$. Since $j=i+1$, we see that $\widetilde{x(j)}=\widetilde{y(j)}$. Thus, by Lemma \ref{L:preparation4}, we see that $\widetilde{z(j)}=\widetilde{x(j)}=\widetilde{y(j)}$. This shows that
either $c_i=a_i$ and $c_j=a_j$, or $c_i=b_i$ and $c_j=b_j$. Finally, for $k>j$, Lemma \ref{L:preparation5} shows that $c_k=a_k=b_k$. Therefore, we conclude, in the case of $j=i+1$, that either $z=x$, or $z=y$.

We proceed with the case that $j>i+1$. By Lemma \ref{L:PPRcovering1},  we know that for $s=i+1,...,j-1$, either $a_j< a_s$, or $a_s < a_i$. Let $z=(c_1,...,c_n)\in R_n$ be such that $x \leq_D z \leq_D y$. Notice that by Lemma \ref{L:preparation4}, the set of the entries of $z$ is equal to the set of entries of $x$. Furthermore,  for $k=1,....,i-1$, we have that $x(k)=z(k)=y(k)$.
Also, since $x(i) \leq_c z(i) \leq_c y(i)$, we must have $a_i \leq c_i \leq b_i$.
We proceed to show that for $s=i+1,...,j-1,j+1,...,n$, $c_s=a_s=b_s$. Once we show this, the proof is finished as follows. By Lemma \ref{L:preparation4}, we know that $\widetilde{z}=\widetilde{x}=\widetilde{y}$. Since $c_s=a_s=b_s$ for all $s\in \{1,...,\widehat{i},...,\widehat{j},...,n \}$, we either have
$c_i=a_i$ and $c_j=a_j$, or $c_i=b_i$ and $c_j=b_j$, in other words, either $z=x$, or $z=y$.

We start by showing that $c_{i+1}=a_{i+1}=b_{i+}$.  By Lemma \ref{L:PPRcovering1},
we know that one of the following is true.

\textit{Case 1.}  $b_{i+1}= a_{i+1} < a_i$, or

\textit{Case 2.}  $b_{i+1}=a_{i+1} > b_i= a_j$.

We start with the first case that $a_{i+1}< a_i \leq c_i$, and we look at the following two subcases: $c_{i+1} < a_{i+1}$ or $c_{i+1} > a_{i+1}$.

\textit{Case 1.1.}  $c_{i+1} < a_{i+1}=b_{i+1}$, or

\textit{Case 1.2}  $c_{i+1} > a_{i+1}=b_{i+1}$.

We first deal with the \textit{Case 1.1.}. Let $\Gamma(x(i+1), c_{i+1}) = \{ a_k|\ c_{i+1} < a_k,\ k=1,...,i+1\}$, and let $\Gamma(z(i+1),c_{i+1}) = \{ c_k|\ c_{i+1} < c_k,\ k=1,...,i+1\}$.  Since
 \begin{equation*}
 \{ a_1,....,a_{i+1}\} \setminus \{a_i, a_{i+1}\} = \{ c_1,....,c_{i+1}\} \setminus \{c_i, c_{i+1}\},
 \end{equation*}
if $c_{i+1} < a_{i+1}$, then $|\Gamma(x(i+1),c_{i+1})| = |\Gamma(z(i+1),c_{i+1})| +1$. Hence, if the position of $c_{i+1}$ in $\widetilde{z(i+1)}$ is $c_{\alpha_s}$, then $a_{\alpha_s} > c_{\alpha_s}$. This is a contradiction with $x(i+1) \leq_c z(i+1)$.

\textit{Case 1.2.} is similar; if $c_{i+1} > a_{i+1}=b_{i+1}$, then let $\Gamma(y(i+1),b_{i+1}) = \{ b_k|\ b_{i+1} < b_k,\ k=1,...,i+1\}$ and $\Gamma(z(i+1),b_{i+1}) = \{ c_k|\ b_{i+1} < c_k,\ k=1,...,i+1\}$. Since
 \begin{equation*}
 \{ b_1,....,b_{i+1}\} \setminus \{b_i, b_{i+1}\} = \{ c_1,....,c_{i+1}\} \setminus \{c_i, c_{i+1}\},
 \end{equation*}
$|\Gamma(z(i+1),b_{i+1})| = |\Gamma(y(i+1),b_{i+1})|+1$. Therefore, if the position of $b_{i+1}$ in $\widetilde{y(i+1)}$ is $b_{\alpha_{s'}}$, then $c_{\alpha_{s'}} > b_{\alpha_{s'}}$. This is a contradiction with $z(i+1) \leq_c y(i+1)$.

We proceed with \textit{Case 2.} that $b_{i+1}=a_{i+1} > b_i=a_j$. Once again, there are two subcases;

\textit{Case 2.1.} $c_{i+1} < a_{i+1}=b_{i+1}$, or

\textit{Case 2.2.}  $c_{i+1} > a_{i+1}=b_{i+1}$.

We continue with \textit{Case 2.1.}. Since,
 \begin{equation*}
 \{ a_1,....,a_{i+1}\} \setminus \{a_i, a_{i+1}\} = \{ c_1,....,c_{i+1}\} \setminus \{c_i, c_{i+1}\}.
 \end{equation*}
we have that $|\Gamma(x(i+1),a_{i+1})| \geq |\Gamma(z(i+1),a_{i+1})|+1 $. So, if the position of $a_{i+1}$ in $\widetilde{x(i+1)}$ is $a_{\alpha_s}$, then $a_{\alpha_s} > c_{\alpha_s}$. This is a contradiction with $x(i+1) \leq_c z(i+1)$.

Finally, we look at \textit{Case 2.2.} Since
\begin{equation*}
 \{ b_1,....,b_{i+1}\} \setminus \{b_i, b_{i+1}\} = \{ c_1,....,c_{i+1}\} \setminus \{c_i, c_{i+1}\},
 \end{equation*}
and since, $c_i \leq b_i < b_{i+1}$ we see that
$|\Gamma(z(i+1),b_{i+1})| = |\Gamma(y(i+1),b_{i+1})|+1$. Therefore, if the position of $b_{i+1}$ in $y(i+1)$ is $b_{\alpha_{s'}}$, then $c_{\alpha_{s'}} > b_{\alpha_{s'}}$. This is a contradiction with $z(i+1) \leq_c y(i+1)$.

We have dealt with all of the cases. We conclude that $c_{i+1}=a_{i+1}=b_{i+1}$. Notice that, as long as $a_k =b_k$ and $i < k < j$, the same arguments above work. Therefore, for any $k=i+1,...,j-1$ we have $c_k = a_k = b_k$.

Note also that $\widetilde{x(j)} = \widetilde{y(j)}$. By Remark \ref{R:deodhar0}, we know that $x(j) \leq_D z(j) \leq_D y(j)$. Hence, by
Lemma \ref{L:preparation4}, $\widetilde{x(j)} = \widetilde{y(j)}= \widetilde{z(j)}$.
Since $c_k = a_k = b_k$ for $k\notin \{i,j\}$, we either have that
$c_i = a_i,\ c_j=a_j$,\ or that $c_i = a_j,\ c_j = a_i$. Therefore, we either have that $z(j) = y(j)$, or that $z(j)=x(j)$.

Finally, for $k>j$, Lemma \ref{L:preparation5} shows that $c_k=a_k=b_k$. This shows that $z=y$ or $z=x$, hence $y$ covers $x$, and hence the proof is complete.
\end{proof}

\begin{rem}
Propositions \ref{P:Deodharcovering0} and  \ref{P:Deodharcovering1} show that a covering for the Pennell-Putcha-Renner ordering is a covering for the Deodhar ordering. Proposition \ref{P:Dcovers} below shows that the converse is also true.
\end{rem}

\begin{lem}\label{L:Dcovers1}
Let $x=(a_1,...,a_n),y=(b_1,...,b_n)\in R_n$. Suppose that there exists $i\in \{1,...,n-1\}$ such that
\begin{enumerate}
\item $a_k=b_k$ for $k=1,...,i-1$, and $b_i>a_i$,
\item $b_i = a_r$ for some $r>i$.
\end{enumerate}
Then, $y \rightarrow_D x$ implies that $y \rightarrow_{PPR} x$.
\end{lem}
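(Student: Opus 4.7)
The strategy is to introduce the swap candidate $z=(c_1,\dots,c_n)\in R_n$ with $c_i=a_r$, $c_r=a_i$, and $c_k=a_k$ for $k\notin\{i,r\}$, and to sandwich it between $x$ and $y$. Lemma \ref{L:D1} immediately yields $x\lneq_D z$ since $a_r>a_i$. If I can show $z\leq_D y$, then the hypothesis $y\rightarrow_D x$ collapses the sandwich to $z=y$, so $y$ is obtained from $x$ by the type $2$ swap at positions $(i,r)$ with $b_i>b_r$. Proposition \ref{P:Deodharcovering1}, applied in the direction ``Deodhar-cover implies $\ell(y)=\ell(x)+1$,'' then yields $y\rightarrow_{PPR} x$ via Theorem \ref{T:PPR}(2).

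The bulk of the work is therefore $z\leq_D y$, which I plan to verify level by level using Corollary \ref{C:containment}. For $k<i$ and for $k=i$ one checks directly that $z(k)=y(k)$, using $a_k=b_k$ for $k<i$ and $c_i=a_r=b_i$. For $k\geq r$ the swap is fully absorbed, so the multisets of $z(k)$ and $x(k)$ coincide and $\widetilde{z(k)}=\widetilde{x(k)}\leq_c\widetilde{y(k)}$ follows from $x\leq_D y$. The only nontrivial range is $i<k<r$, and there, a direct expansion of the threshold counts together with cancellation of the common contributions (the positions below $i$, and the fact that $c_i=b_i=a_r$) reduces the desired $\widetilde{z(k)}\leq_c\widetilde{y(k)}$ to
\begin{equation*}
|\{j\in(i,k]:a_j>v\}|\leq|\{j\in(i,k]:b_j>v\}|\qquad\text{for every }v\in\Z,
\end{equation*}
and the inequality $x\leq_D y$ at level $k$ delivers this automatically whenever $v<a_i$ or $v\geq a_r$.

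What remains is the window $v\in[a_i,a_r)$, where the cover hypothesis $y\rightarrow_D x$ is essential. Here I plan to argue by contradiction: if the inequality were violated at some $(k_0,v_0)$ in the window, then the inequality $x\leq_D y$ at level $k_0$ is saturated, and some index $j^\ast\in(i,k_0]$ with $a_{j^\ast}>v_0$ must carry the excess. I will then construct an intermediate $z''\in R_n$---the swap of $x$ at $(i,j^\ast)$ for a carefully chosen $j^\ast$ whose value $a_{j^\ast}$ lies strictly between $a_i$ and $a_r$ whenever any such candidate exists, and handling the complementary subcase (where every large $a_j$ in $(i,r)$ exceeds $a_r$) by an analogous type-$1$ increment of $a_i$---so as to produce $x\lneq_D z''\lneq_D y$, contradicting $y\rightarrow_D x$.

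The main obstacle will be this last construction: the witness $j^\ast$ (or increment target) must simultaneously be large enough to exhibit the count excess at $(k_0,v_0)$ and small enough, strictly below $a_r$, to keep $\widetilde{z''(i)}\leq_c\widetilde{y(i)}$ at level $i$; the split into the two subcases above is precisely what makes such a choice available.
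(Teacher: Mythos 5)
Your skeleton is sound and the reduction is correct: with $z$ the $(i,r)$-swap of $x$, the levels $k<i$, $k=i$ and $k\geq r$ are handled exactly as you say, and for $i<k<r$ the comparison $\widetilde{z(k)}\leq_c\widetilde{y(k)}$ does reduce to the displayed count inequality, which $x\leq_D y$ supplies outside the window $v\in[a_i,a_r)$. The genuine gap is that the step you defer --- producing $z''$ with $x\lneq_D z''\lneq_D y$ from a violation at $(k_0,v_0)$ --- is the entire content of the lemma, and your sketch of it does not close. Verifying $z''\leq_D y$ for the $(i,j^\ast)$-swap meets the same window problem one level down: at levels $i<k<j^\ast$ and thresholds $v\in[a_i,a_{j^\ast})$ you again only get the inequality with a $+1$ slack from $x\leq_D y$. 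The argument terminates only if $j^\ast$ is taken to be the \emph{globally minimal} index in $(i,r]$ whose value lies in $(a_i,a_r]$: then no position strictly between $i$ and $j^\ast$ carries a value in $(a_i,a_r]$, so every entry there exceeding $a_i$ in fact exceeds $a_r$, and the sub-window inequalities follow from the unconditional threshold-$a_r$ instance. Your ``carefully chosen $j^\ast$'' gestures at this but does not pin it down, and pinning it down is where the proof lives. Your complementary subcase is also off target: if every $a_j$ with $j\in(i,k_0]$ and $a_j>v_0$ exceeds $a_r$, then the threshold-$a_r$ inequality already forces the threshold-$v_0$ one, so no violation can occur there and no type-$1$ increment is needed --- fortunately, since a value in $(a_i,a_r)$ not already occurring in $x$ need not exist, and that construction would not obviously stay below $y$.

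For comparison, the paper avoids the contradiction argument inside the window altogether: it defines $z$ from the start as the swap of $x$ at $(i,r')$, where $r'$ is the minimal index in $(i,r]$ with $a_{r'}>a_i$ (the minimal nonzero position when $a_i=0$), so that all entries strictly between positions $i$ and $r'$ lie below $a_i$ and the counts $|\Gamma(z(k),c_m)|$ for $i<k<r'$ collapse to level $i$; then $x\lneq_D z\leq_D y$ is verified directly and the covering hypothesis forces $z=y$. Your auxiliary $z''$, once correctly specified, is essentially the paper's $z$; the difference is that the paper makes the minimal choice at the outset rather than extracting it from a hypothetical violation, which is what makes its verification one-step rather than recursive.
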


\begin{proof}
Our strategy for proving that $y\rightarrow_D x$ implies $y\rightarrow_{PPR} x$ is as follows. We construct an element $z\in R_n$, such that $x\nleq_D z \leq_Dy $ and the pair $x,z\in R_n$ satisfy the hypothesis of the Proposition \ref{P:Deodharcovering1}. Thus, $z \rightarrow_D x$ implies that $\ell(z)= \ell(x)+1$, and this, by Lemma \ref{L:PPRcovering1} this implies that $z \rightarrow_{PPR} x$.
First, assume that $a_i=0$. Let $r'$ be the smallest index such that $i<r' \leq r$, and $a_{r'}$ is nonzero.
Define $z=(c_1,...,c_n)$ by setting $c_k=a_k$ if $k\notin \{i,r'\}$, and $c_i=a_{r'}$, $c_{r'}=a_i$. It is
easy to check that (see the proof of case $a_i>0$, below) $x \lneq_D z \leq_D y$, and that the pair $x,z$ satisfy the hypothesis of
Proposition \ref{P:Deodharcovering1}. Therefore, we are done in the case that $a_i=0$.
We proceed with the assumption that $a_i>0$.

Let $r'$ be the smallest integer such that
\begin{enumerate}
\item $i< r' \leq r$,
\item $a_i<a_{r'}$.
\end{enumerate}
Therefore,
\begin{equation}\label{E:observe1}
\text{if}\ i<s<r', \text{then}\ a_s < a_i.
\end{equation}

We define $z=(c_1,...,c_n)\in R_n$ as follows.  Let $k\in \{1,...,\widehat{i},...,\widehat{r'},....,n\}$. Set $c_k=a_k$. Also, set $c_i=a_{r'}$, and $c_{r'}=a_i$. It is easy to check that $x \lneq_D z$. We are going to show that $z \leq_D y$.  Note the following
\begin{enumerate}
\item $x(k)=y(k)=z(k)$ for $k=1,...,i-1$.
\item $\widetilde{x(i)} \leq_c \widetilde{z(i)} \leq_c \widetilde{y(i)}$.
\item $\widetilde{z(k)}=\widetilde{x(k)} \leq_c \widetilde{y(k)}$ for $k=r',...,n$.
\end{enumerate}

Therefore, it is enough to prove that $z(k) \leq_c y(k)$ for $k=i+1,...,r'-1$. To this end, $k\in \{i+1,...,r'-1\}$, and let $1\leq m \leq k$.  We are going to show that $|\Gamma(z(k),c_m)|\leq |\Gamma(y(k),c_m)|$.

There are two cases; $c_m < a_i$, or $c_m \geq a_i$. We start with the first one.

Since $c_m< a_i$, $m\notin \{i,r\}$, hence $a_m=c_m$.
The set of entries of $z(k)$ that are larger than $c_m=a_m$ is equal to the set of entries of $x(k)$ which are larger than $a_m$. Therefore,
\begin{equation}\label{E:case11}
|\Gamma(z(k),c_m)| = |\Gamma(x(k),c_m)|\leq |\Gamma(y(k),c_m)|,\ \text{if}\ c_m < a_i.
\end{equation}

The next case we check is that $c_m \geq a_i=c_{r'}$. By the observation (\ref{E:observe1}) above,
\begin{equation}
|\Gamma(z(k),c_m)|= |\Gamma(z(i),c_m)|.
\end{equation}

 On the other hand, since $z(i) \leq_c y(i)$,
 \begin{equation*}
 |\Gamma(z(i),c_m)| \leq |\Gamma(y(i),c_m)|,
 \end{equation*}
 and since $i<k$, we have
 \begin{equation*}
 |\Gamma(y(i),c_m)|\leq |\Gamma(y(k),c_m)|.
 \end{equation*}

Therefore,
\begin{equation}\label{E:case12}
|\Gamma(z(k),c_m)|\leq |\Gamma(y(k),c_m)|, \text{if}\ c_m \geq a_i.
\end{equation}

Hence, (\ref{E:case11}) and (\ref{E:case12}) shows that $z(k) \leq_c y(k)$ for
$k\leq r'-1$.
Having constructed $z\in R_n$, such that $x \lneq_D z \leq_D y$, since $y$ covers $x$ (in the Deodhar ordering), we have that $z=y$. Thus, we are exactly as in the hypotheses of the Proposition \ref{P:Deodharcovering1}. Therefore, we have that $\ell(y)= \ell(x)+1$, and that $y \rightarrow_{PPR} x$.

\end{proof}

\begin{lem}\label{L:Dcovers2}
Let $x=(a_1,...,a_n),y=(b_1,...,b_n)\in R_n$. Suppose that there exists $i\in \{1,...,n-1\}$ such that
\begin{enumerate}
\item $a_k=b_k$ for $k=1,...,i-1$, and $b_i>a_i$,
\item $b_i \notin \{a_1,...,a_n\}$.
\end{enumerate}
Then, $y \rightarrow_D x$ implies that $y \rightarrow_{PPR} x$.
\end{lem}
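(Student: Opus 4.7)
The plan parallels that of Lemma~\ref{L:Dcovers1}: construct an element $z\in R_n$ with $x\lneq_D z\leq_D y$ for which the pair $(x,z)$ satisfies the hypothesis of Proposition~\ref{P:Deodharcovering0}. The covering assumption $y\rightarrow_D x$ then forces $z=y$, so that $(x,y)$ itself fits that hypothesis. Proposition~\ref{P:Deodharcovering0}, together with Lemma~\ref{L:PPRcovering0} and Theorem~\ref{T:PPR}(1), finally yields $y\rightarrow_{PPR} x$.

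The natural candidate is
\[
z \;=\; (a_1,\ldots,a_{i-1},\,b_i,\,a_{i+1},\ldots,a_n),
\]
obtained from $x$ by a single entry-increase at position $i$. The hypothesis $b_i\notin\{a_1,\ldots,a_n\}$ together with $b_i>a_i\geq 0$ guarantees $z\in R_n$, and $(x,z)$ manifestly satisfies the hypothesis of Proposition~\ref{P:Deodharcovering0}. The inequality $x\lneq_D z$ is immediate from monotonicity: replacing a single coordinate by a strictly larger value componentwise raises every sorted prefix $\widetilde{z(k)}$ relative to $\widetilde{x(k)}$ for $k\geq i$.

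The crux is the inequality $z\leq_D y$. By Proposition~\ref{P:containment} it suffices to check $|\Gamma(z(k),c_m)|\leq|\Gamma(y(k),c_m)|$ at each entry $c_m$ of $z$ with $m\leq k$. A direct computation gives
\[
|\Gamma(z(k),v)| \;=\; |\Gamma(x(k),v)| + \bigl([b_i>v]-[a_i>v]\bigr)\cdot\mathbf{1}_{\{i\leq k\}},
\]
so, in view of $x\leq_D y$, the inequality is automatic unless $v$ lies in $[a_i,b_i)$; in that range we need the strict inequality $|\Gamma(y(k),v)|\geq|\Gamma(x(k),v)|+1$. This is where the hypothesis $b_i\notin\{a_l\}$ enters: letting $j^\ast$ be the position of $b_i$ in $\widetilde{y(k)}$ (for $k\geq i$), we have $\widetilde{x(k)}_{j^\ast}\leq\widetilde{y(k)}_{j^\ast}=b_i$ with strict inequality, since $b_i$ is not an entry of $x(k)$, and this yields the strict gap at $v=b_i-1$.

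The main obstacle is to extend the strict inequality to every relevant threshold $v=a_m\in[a_i,b_i)$ with $m\neq i$. I would argue by contradiction, using the covering hypothesis: if $|\Gamma(y(k_0),a_{m_0})|=|\Gamma(x(k_0),a_{m_0})|$ for some bad pair, then a counting argument at that threshold (using that $b_i$ on the $y$-side already contributes one) produces a position $l$ with $i<l\leq k_0$, $a_l>a_{m_0}$, and $b_l\leq a_{m_0}$. A suitable modification of $x$ at positions $i$ and $l$---swapping $a_i$ and $a_l$ when $a_l<b_i$, and otherwise zeroing out $a_l$ while installing a value in $(a_i,b_i)$ at position $i$---produces an element $w\in R_n$ with $x\lneq_D w\lneq_D y$, contradicting $y\rightarrow_D x$. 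This establishes $z\leq_D y$; then $z=y$ forces $a_k=b_k$ for all $k\neq i$, and the conclusion $y\rightarrow_{PPR} x$ follows from Proposition~\ref{P:Deodharcovering0} and Lemma~\ref{L:PPRcovering0}.
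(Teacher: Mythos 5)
Your overall strategy is plausible and close in spirit to the paper's: take $z$ to be the single-entry increase $(a_1,\ldots,a_{i-1},b_i,a_{i+1},\ldots,a_n)$, reduce $z\leq_D y$ to a strict gap $|\Gamma(y(k),v)|\geq|\Gamma(x(k),v)|+1$ at thresholds $v\in[a_i,b_i)$, and rule out bad thresholds by producing an intermediate element that contradicts $y\rightarrow_D x$. The setup is fine ($z\in R_n$, $x\lneq_D z$, the reduction to $v\in[a_i,b_i)$, and the strict gap at $v=b_i-1$ via $b_i\notin\{a_1,\ldots,a_n\}$ are all correct). The gaps are in the contradiction step, which is the entire content of the lemma. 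First, the count at a bad threshold $v=a_{m_0}$ yields only $|\{l\in(i,k_0]:a_l>v\}|=1+|\{l\in(i,k_0]:b_l>v\}|$, hence the existence of some $l>i$ with $a_l>v$; the position-wise claim that $b_l\leq v$ at that \emph{same} $l$ does not follow from a cardinality comparison (and is not needed, but as stated it is unjustified). Second, and fatally for the branch $a_l>b_i$: the proposed $w$ obtained by zeroing out position $l$ and installing a value of $(a_i,b_i)$ at position $i$ does \emph{not} satisfy $x\leq_D w$. At the truncation $k=l$ the entry $a_l>b_i$ has been replaced by $0$ while position $i$ only rises to some $c<b_i<a_l$; if $a_l$ is the maximum of $x(l)$ (which can certainly happen), then $\max\widetilde{w(l)}<\max\widetilde{x(l)}$, so $\widetilde{x(l)}\not\leq_c\widetilde{w(l)}$. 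This branch is in fact vacuous --- refining your own count by subtracting off the threshold $b_i-1$, where the gap is already strict, shows that a bad threshold forces some $l>i$ with $a_{m_0}<a_l<b_i$ --- but your write-up neither proves the vacuity nor supplies a working construction for it.

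Third, in the swap branch you assert $x\lneq_D w\lneq_D y$ with no verification. The inequality $w\leq_D y$ for the swap of $a_i$ with $a_l$ is not automatic for an arbitrary such $l$; one must take $l$ minimal with $a_i<a_l<b_i$ and check the $\Gamma$-inequalities truncation by truncation on the range $i<k<l$. That verification is precisely the technical heart of the paper's treatment (its Case 2 here, and the whole of Lemma~\ref{L:Dcovers1}), so it cannot be left as an assertion. For comparison, the paper avoids your contradiction scheme by splitting up front on $\gamma(x,i)=\{a_t:\,t>i,\ a_t>a_i\}$ and choosing the intermediate element accordingly (the direct increase when no $a_t$ lies in $(a_i,b_i)$, the minimal swap otherwise), carrying out the $\Gamma$-estimates in each case. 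Your route could be repaired along the same lines, but as written it has a broken construction in one branch and an unproved key inequality in the other.
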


\begin{proof}
We make use of the following set
\begin{equation*}
\gamma(x,i)=\{ a_t:\ t>i\  a_i>a_t\}.
\end{equation*}

There are two cases; $\gamma(x,i)=\varnothing$, o r $\gamma(x,i)\neq \varnothing$. We start with the first case that  $\gamma(x,i)=\varnothing$.

Define $z=(c_1,...,c_n)$ as follows. Let $c_k=a_k$ for $k\neq i$, and let $c_i=b_i$.
Clearly $x \lneq_D z$. We are going to show that $z \leq_c y$.

It is enough to show that
\begin{eqnarray*}
|\Gamma(z(k),c_m)| \leq |\Gamma(y(k),c_m)|,
\end{eqnarray*}
for $k>i$, and $1\leq m \leq k$.

To this end, let $1\leq m \leq k$, and $i<k$.  If $c_m \geq a_i$, then

\begin{eqnarray*}
|\Gamma(z(k),c_m)| = |\Gamma(z(i),c_m)| = |\Gamma(y(i),c_m)| \leq |\Gamma(y(k),c_m)|.
\end{eqnarray*}

If $c_m < a_i$, then $c_m=a_m$, and
\begin{eqnarray*}
|\Gamma(z(k),c_m)|=|\Gamma(x(k),a_m)|\leq |\Gamma(y(k),a_m)|=|\Gamma(y(k),c_m)|.
\end{eqnarray*}
Therefore, if $\gamma(x,i)=\varnothing$, then $z\leq_D y$.

Having constructed $z\in R_n$, such that $x \lneq_D z \leq_D y$, since $y$ covers $x$ (in the Deodhar ordering), we have that $z=y$. Thus, we are exactly as in the hypotheses of the Proposition \ref{P:Deodharcovering1}. Therefore, we have that $\ell(y)= \ell(x)+1$, and that $y \rightarrow_{PPR} x$.

We continue with the case where $\gamma(x,i)\neq \varnothing$. Once again, there are two subcases;
either there exits $a_t\in \gamma(x,i)$ such that $b_i>a_t$, or for every $a_t \in \gamma(x,i)$, $a_t>b_i$.

We proceed with the first one.
Then, there exists $a_t\in\gamma(x,i)$ such that $b_i>a_t$. Let $t'$ be the smallest number such that
\begin{enumerate}
\item $i<t'$,
\item $a_i<a_{t'}<b_i$.
\end{enumerate}

Therefore, if $i<s<t'$, then
\begin{equation}\label{E:observe2}
a_i > a_s.
\end{equation}

Define $z=(c_1,...,c_n)$ as follows. If $k\notin \{i,t'\}$, then $c_k=a_k$, and $c_i=a_{t'}$, $c_{t'}=a_i$.
Clearly $x \lneq_D z$. We are going to show that $z \leq_c y$. It is enough to show that

\begin{enumerate}
\item $x(k)=y(k)=z(k)$ for $k=1,...,i-1$.
\item $\widetilde{x(i)} \leq_c \widetilde{z(i)} \leq_c \widetilde{y(i)}$.
\item $\widetilde{z(k)}=\widetilde{x(k)} \leq_c \widetilde{y(k)}$ for $k=t',...,n$.
\end{enumerate}

Therefore, it is enough to prove that $z(k) \leq_c y(k)$ for $k=i+1,...,t'-1$. To this end, $k\in \{i+1,...,t'-1\}$, and let $1\leq m \leq k$.  We are going to show that $|\Gamma(z(k),c_m)|\leq |\Gamma(y(k),c_m)|$.

There are two cases; $c_m < a_i$, or $c_m \geq a_i$. We start with the first one.

Since $c_m< a_i$, $m\notin \{i,t'\}$, hence $a_m=c_m$.
The set of entries of $z(k)$ that are larger than $c_m=a_m$ is equal to the set of entries of $x(k)$ which are larger than $a_m$. Therefore,
\begin{equation}\label{E:case21}
|\Gamma(z(k),c_m)| = |\Gamma(x(k),c_m)|\leq |\Gamma(y(k),c_m)|,\ \text{if}\ c_m < a_i.
\end{equation}

To deal with the other case we check that $c_m \geq a_i=c_{t'}$. By the observation (\ref{E:observe2}) above,
\begin{equation}
|\Gamma(z(k),c_m)|= |\Gamma(z(i),c_m)|.
\end{equation}

 On the other hand, since $z(i) \leq_c y(i)$,
 \begin{equation*}
 |\Gamma(z(i),c_m)| \leq |\Gamma(y(i),c_m)|,
 \end{equation*}
 and since $i<k$, we have
 \begin{equation*}
 |\Gamma(y(i),c_m)|\leq |\Gamma(y(k),c_m)|.
 \end{equation*}

Therefore,
\begin{equation}\label{E:case22}
|\Gamma(z(k),c_m)|\leq |\Gamma(y(k),c_m)|, \text{if}\ c_m \geq a_i.
\end{equation}

Hence, (\ref{E:case21}) and (\ref{E:case22}) show that $z(k) \leq_c y(k)$ for $k\leq t'-1$.

We proceed with the case that $\gamma(x,i)\neq \varnothing$, and $a_t>b_i$, for all $a_t\in \gamma(x,i)$.

Define $z=(c_1,...,c_n)$ as follows. If $k\neq i$, then $c_k=a_k$, and $c_i=b_i$.
Clearly $x \lneq_D z$. We are going to show that $z \leq_c y$.

It is enough to show that
\begin{eqnarray*}
|\Gamma(z(k),c_m)| \leq |\Gamma(y(k),c_m)|,
\end{eqnarray*}
for $k>i$, and $1\leq m \leq k$.

To this end, let $1\leq m \leq k$, and $i<k$.  If $c_m \geq b_i$, then

\begin{eqnarray*}
|\Gamma(z(k),c_m)| = |\Gamma(x(k),c_m)| \leq |\Gamma(y(i),c_m)|.
\end{eqnarray*}

If $c_m < b_i$, then $m<i$, and $c_m=a_m=b_m$. Note that the following. If $t>i$, then $b_t > b_i$. Assume otherwise. Let $i<t$ be the
smallest number such that $b_i>b_t$. Then,
\begin{eqnarray*}
|\Gamma(y(t),b_i)| < |\Gamma(x(k),b_i)|,
\end{eqnarray*}
which is a contradiction. Hence,
\begin{eqnarray*}
| \{ c_s:\ i<s\leq k,\ c_s>b_i\}| &=& | \{ b_s:\ i<s\leq k, b_s>b_i\}| = k-i+1
\end{eqnarray*}

Therefore,
\begin{eqnarray*}
|\Gamma(z(k),c_m)| &=& |\{c_s:\ i\geq s,\ c_s>c_m \}| + |\{ c_s:\ i<s\leq k,\ c_s>c_m\}|\\
&=& |\{b_s:\ i\geq s,\ b_s>c_m\}| + | \{ b_s:\ i<s\leq k,\  b_s>b_i\}| \\
&=& |\{b_s:\ i\geq s,\ b_s>c_m\}| + | \{ b_s:\ i<s\leq k,\  b_s>c_m\}| \\
&=& |\Gamma(y(k),c_m)|.
\end{eqnarray*}

Therefore, if $\gamma(x,i)\neq \varnothing$, then $z\leq_D y$. Having constructed $z\in R_n$, such that $x \lneq_D z \leq_D y$, since $y$ covers $x$ (in the Deodhar ordering), we have that $z=y$. Thus, we are exactly as in the hypotheses of the Proposition \ref{P:Deodharcovering1}. Therefore, we have that $\ell(y)= \ell(x)+1$, and that $y \rightarrow_{PPR} x$.

We have handled all the cases, and the proof is complete.

\end{proof}

\begin{prop}\label{P:Dcovers}
Let $x=(a_1,...,a_n)$ and $y=(b_1,...,b_n)$ be two elements from $R_n$. Suppose that $y \rightarrow_D x$. Then $y \rightarrow_{PPR} x$.
\end{prop}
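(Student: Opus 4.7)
My plan is to reduce the proposition to the two preparatory lemmas (Lemmas \ref{L:Dcovers1} and \ref{L:Dcovers2}), together with Proposition \ref{P:Deodharcovering0}, by branching on the first coordinate at which $x$ and $y$ disagree. Since $y \rightarrow_D x$ entails $x \lneq_D y$, there is a smallest index $i$ with $a_i \neq b_i$; then $a_k = b_k$ for all $k < i$, and in particular $\widetilde{x(i-1)} = \widetilde{y(i-1)}$.

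The first technical step is to show $a_i < b_i$. The multisets $\{a_1,\dots,a_i\}$ and $\{b_1,\dots,b_i\}$ differ only by the exchange of $a_i$ for $b_i$, so $\widetilde{x(i)}$ and $\widetilde{y(i)}$ are obtained from the common sorted sequence $\widetilde{x(i-1)}$ by inserting $a_i$ and $b_i$, respectively. If one had $a_i > b_i$, then at the position $\pi$ where $a_i$ lands, the corresponding entry of $\widetilde{y(i)}$ would be either $b_i$ itself (if the two insertion positions coincide) or one of the original entries of $\widetilde{x(i-1)}$ strictly smaller than $a_i$ (by distinctness of the positive entries of $x$ in the rook monoid). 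Either possibility forces $\widetilde{x(i)}_\pi > \widetilde{y(i)}_\pi$, contradicting $\widetilde{x(i)} \leq_c \widetilde{y(i)}$, which is part of $x \leq_D y$. The choice of $i$ rules out $a_i = b_i$, so $a_i < b_i$.

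I then split into two cases. If $i = n$, then $a_k = b_k$ for every $k \neq i$, so $(x,y)$ satisfies the hypothesis of Proposition \ref{P:Deodharcovering0}; combined with the covering assumption $y \rightarrow_D x$, that proposition yields $\ell(y) = \ell(x) + 1$, and since $R_n$ is graded by $\ell$ and $x \leq_{PPR} y$ by Theorem \ref{T:PPR}(1), we conclude $y \rightarrow_{PPR} x$. If instead $i < n$, then $b_i > a_i \geq 0$ is a positive integer, and $b_i$ is distinct from each $b_k = a_k$ with $k < i$ because $y$ is a rook matrix. Hence either $b_i = a_r$ for some $r > i$, or else $b_i \notin \{a_1,\dots,a_n\}$. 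In the first case $(x,y)$ meets the hypotheses of Lemma \ref{L:Dcovers1}, and in the second those of Lemma \ref{L:Dcovers2}; either lemma directly yields $y \rightarrow_{PPR} x$. The only delicate point is the inequality $a_i < b_i$, as the substantive constructions of intermediate elements have already been absorbed into the two preparatory lemmas.
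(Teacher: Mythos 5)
Your proposal is correct and follows essentially the same route as the paper: locate the first index $i$ where $x$ and $y$ disagree, observe $a_i<b_i$, and split according to whether $b_i$ occurs later in $x$ (Lemma \ref{L:Dcovers1}) or not at all (Lemma \ref{L:Dcovers2}). You additionally justify the inequality $a_i<b_i$ and handle the edge case $i=n$ via Proposition \ref{P:Deodharcovering0}, two points the paper's proof passes over silently; otherwise the arguments coincide.
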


\begin{proof}
Let $i\in \{1,....,n-1\}$ be the smallest index such that $k=1,...,i-1$, $a_k=b_k$ and $b_i > a_i$.

Then we have either

\textit{Case 1.} $b_i = a_r$ for some $r>i$, or

\textit{Case 2.} $b_i \notin \{a_1,...,a_n\}$.

Then, in the \textit{Case 1.}, the Lemma \ref{L:Dcovers1} shows that $y \rightarrow_{PPR} x$, and similarly, in the \textit{Case 2.}, the Lemma \ref{L:Dcovers2} shows that $y \rightarrow_{PPR} x$.
\end{proof}

\begin{thm}
The Deodhar ordering $\leq_D$ on $R_n$ is the same as Pennell-Putcha-Renner ordering
$\leq_{PPR}$ on $R_n$.
\end{thm}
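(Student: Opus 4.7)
The plan is to reduce the theorem to the statement that the two partial orders $\leq_D$ and $\leq_{PPR}$ on the finite set $R_n$ possess the same Hasse diagram, and then invoke the standard fact that a partial order on a finite set is the transitive closure of its covering relations. All the real content has already been packaged into the preceding propositions; the theorem itself is essentially a one-line assembly.

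First, I would note that if $y \rightarrow_{PPR} x$, then by Lemmas \ref{L:PPRcovering0} and \ref{L:PPRcovering1} the pair $(x,y)$ must be of one of exactly two combinatorial shapes: either $x$ and $y$ differ in a single coordinate $i$ with $a_i < b_i$ of the specific form in Lemma \ref{L:PPRcovering0}, or they differ by a swap of two positions $i < j$ satisfying the conditions of Lemma \ref{L:PPRcovering1}. In the first case Proposition \ref{P:Deodharcovering0} yields $y \rightarrow_D x$, and in the second case Proposition \ref{P:Deodharcovering1} yields the same conclusion. Hence every PPR-cover is a Deodhar-cover.

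For the converse direction, if $y \rightarrow_D x$, then Proposition \ref{P:Dcovers} directly supplies $y \rightarrow_{PPR} x$. Combining the two directions, the covering relations of $(R_n, \leq_D)$ coincide with those of $(R_n, \leq_{PPR})$.

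Finally, since $R_n$ is finite, each of the partial orders $\leq_D$ and $\leq_{PPR}$ equals the transitive closure of its own set of covering relations. Because these covering sets agree, the two partial orders must be identical, proving the theorem. The heart of the argument — and the one genuinely hard step — has already been carried out in Propositions \ref{P:Deodharcovering0}, \ref{P:Deodharcovering1}, and \ref{P:Dcovers}; the final theorem is just the formal assembly.
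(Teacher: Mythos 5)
Your proposal is correct and follows essentially the same route as the paper: both directions are obtained by citing Propositions \ref{P:Deodharcovering0}, \ref{P:Deodharcovering1}, and \ref{P:Dcovers} to match the covering relations, and then concluding that two partial orders on a finite set with the same covers coincide. Your added remark that a PPR-cover must have one of the two elementary shapes of Theorem \ref{T:PPR} (so that the two propositions apply) makes explicit a step the paper leaves implicit, but it is the same argument.
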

\begin{proof}
By the Proposition \ref{P:Deodharcovering0}, and the Proposition
\ref{P:Deodharcovering1} we know that $y \rightarrow_{PPR} x$ implies  $y \rightarrow_D x$. Conversely, by the Proposition \ref{P:Dcovers}, if $y \rightarrow_D x$, then
$y \rightarrow_{PPR} x$.  Therefore, the two orderings have the same covering relations, hence they are the same order.
\end{proof}

\begin{cor}\label{C:deodhar} (Deodhar)
Let $x=(a_1,....,a_n)$ and $y= (b_1,...,b_n)$ be two permutations. Then, $x \leq y$ in the Bruhat ordering $\leq $ on $S_n$ if and only if  $x \leq_D y$ in the Deodhar ordering on $S_n$.
\end{cor}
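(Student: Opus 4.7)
The proof plan is a short deduction from the main theorem, once one observes that the Bruhat-Chevalley order on $S_n$ agrees with the restriction of the Pennell-Putcha-Renner order on $R_n$ to the subset $S_n$. Indeed, $S_n$ sits inside $R_n$ as the set of rank-$n$ elements, and both orderings are defined by the inclusion of double cosets into Zariski closures of double cosets. Since $GL_n$ is Zariski open in $M_n$, for $\sigma,\tau\in S_n$ the closure $\overline{B\tau B}$ taken in $GL_n$ coincides with $\overline{B\tau B}\cap GL_n$ computed in $M_n$, so
\begin{equation*}
B\sigma B\subseteq \overline{B\tau B}\ \text{in}\ GL_n \iff B\sigma B\subseteq \overline{B\tau B}\ \text{in}\ M_n,
\end{equation*}
i.e., the restriction to $S_n$ of $\leq_{PPR}$ is precisely the classical Bruhat-Chevalley order $\leq$ of the symmetric group.

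Next I would note that Definition \ref{deodonrn.defn} of the Deodhar ordering on $R_n$ is given by exactly the same formula as the definition of $\leq_D$ on $S_n$, so the restriction of $\leq_D$ from $R_n$ to $S_n$ is the Deodhar ordering of $S_n$. Combining these two observations with the main theorem, for any $x,y\in S_n\subseteq R_n$ we have the chain of equivalences
\begin{equation*}
x\leq y \text{ in Bruhat order on } S_n \iff x\leq_{PPR} y \text{ in } R_n \iff x\leq_D y \text{ in } R_n \iff x\leq_D y \text{ in } S_n,
\end{equation*}
where the middle equivalence is the main theorem and the outer ones are the identifications just recorded.

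Since every step is either a restatement of a definition or a direct appeal to an already-proved result, there is no real obstacle; the only thing to be careful about is the (essentially trivial) verification that Zariski closure behaves well under the open inclusion $GL_n\hookrightarrow M_n$, which ensures that the Bruhat-Chevalley order on $S_n$ truly coincides with the restriction of $\leq_{PPR}$.
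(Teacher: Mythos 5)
Your proof is correct and is exactly the deduction the paper intends: the corollary is stated without proof as an immediate consequence of the main theorem, just as the earlier remark in the introduction promises. Your additional verification that the Bruhat--Chevalley order on $S_n$ coincides with the restriction of $\leq_{PPR}$ to $S_n$, via the behaviour of Zariski closures under the open inclusion $GL_n\hookrightarrow M_n$, supplies the one detail the paper leaves implicit, and it is carried out correctly.
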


\bibliographystyle{amsalpha}

\end{document}